\documentclass[final]{siamart251216}
\usepackage{amsmath,amssymb,amsfonts,graphicx,epstopdf,bm}
\usepackage{amsopn,booktabs,multirow}
\usepackage{soul}
\usepackage{algorithm}
\usepackage[noend]{algorithmic}

\usepackage{float}
\ifpdf
  \DeclareGraphicsExtensions{.eps,.pdf,.png,.jpg}
\else
  \DeclareGraphicsExtensions{.eps}
\fi
\allowdisplaybreaks
\newtheorem{assumption}{Assumption}

\usepackage{hyperref}

\headers{Subspace curvature-scaling high-index saddle dynamics}{Yin, Zhang, and Zhang}

\title{Subspace curvature-scaling high-index saddle dynamics for accelerating ill-conditioned saddle point searches\thanks{Submitted to.
\funding{This work was supported by the National Natural Science Foundation of China (No. 12225102, T2321001, 12288101).}}}

\author{
Jianyuan Yin\thanks{School of Mathematical Sciences, 
Laboratory of Mathematics and Complex Systems, Ministry of Education, Beijing Normal University, Beijing 100875, China (\email{jyyin@bnu.edu.cn}).}
\and
Lei Zhang\thanks{School of Mathematical Sciences, Beijing International Center for Mathematical Research, Center for Machine Learning Research, Center for Quantitative Biology, Peking University, Beijing 100871, China (\email{zhangl@math.pku.edu.cn}).}
\and
Zhiyi Zhang\thanks{School of Mathematical Sciences, Peking University, Beijing 100871, China (\email{zzy2323@pku.edu.cn}).} 
}

\ifpdf
\hypersetup{
  pdftitle={Subspace curvature-scaling high-index saddle dynamics},
  pdfauthor={Yin, Zhang, and Zhang}
}
\fi

\begin{document}

\maketitle

\begin{abstract}
We propose a subspace curvature-scaling high-index saddle dynamics (SCS-HiSD) method to accelerate high-index saddle dynamics (HiSD) for locating ill-conditioned saddle points. 
The key observation is that HiSD already computes approximations of the unstable Hessian eigenvectors during iteration, which can be used at negligible additional cost to construct an inverse-Hessian approximation on the unstable subspace. 
This subspace curvature information is incorporated to adaptively scale the dynamics along each unstable direction, eliminating the dependence of the convergence rate on the smallest-magnitude negative eigenvalues and thereby substantially accelerating the convergence for ill-conditioned saddle points. 
We establish the linear stability of the continuous SCS-HiSD system and provide a local convergence analysis for the discrete iterative scheme.
This method extends naturally to address slow convergence caused by small positive eigenvalues.
Numerical experiments on benchmark problems and a liquid-crystal model demonstrate that SCS-HiSD substantially accelerates the computation of ill-conditioned saddle points, particularly in severely ill-conditioned cases.
\end{abstract}

\begin{keywords}
  saddle point, high-index saddle dynamics, ill-conditioned problem, linear stability, local convergence, solution landscape
\end{keywords}

\begin{AMS}
65K10, 65L20, 37C10
\end{AMS}

\section{Introduction}
Exploring complex energy landscapes is a fundamental problem in physics, chemistry, and materials science \cite{stillinger1995topographic, onuchic1997theory, laio2002escaping, oganov2006crystal}. 
Local minima correspond to (meta)stable states, whereas the transitions between these states are characterized by saddle points \cite{hanggi1990reaction, weinan2010transition}. 
Primary attention has been devoted to index-1 saddle points, also known as transition states, which lie on the minimum energy path connecting two adjacent local minima, and are closely related to the corresponding transition pathways.
Developing efficient algorithms for locating saddle points is essential for a comprehensive understanding of complex energy landscapes.

Existing numerical methods for locating saddle points can be classified mainly into two categories, path-finding methods and surface-walking methods.
Representative path-finding methods include the nudged elastic band method \cite{jonsson1998nudged} and the string method \cite{e2002string}, which iteratively compute a minimum energy path connecting two local minima. 
Surface-walking methods directly search for saddle points by using local gradient and Hessian information, or their approximations. 
Typical examples include gentlest ascent dynamics \cite{e2011gentlest,gu2018simplified}, the eigenvector-following method \cite{cerjan1981finding}, the activation-relaxation method \cite{cances2009some}, the iterative minimization formulation \cite{gao2015iterative}, and dimer-type methods \cite{henkelman1999dimer,zhang2012shrinking,zhang2016optimization}. 
Some surface-walking methods for saddle points have also been generalized to data-driven cases \cite{gu2022active, bello2023gentlest, georgiou2023locating, goswami2025efficient}.

Beyond transition states, high-index saddle points can provide important structural information about energy landscapes \cite{evans2003free, Wales2004energy, kastner2008phase}. 
Several surface-walking methods designed for index-1 saddle points have been extended to locate high-index saddle points \cite{quapp2014locating}. 
Recently, high-index saddle dynamics (HiSD) has been developed to locate index-$k$ saddle points by transforming an unstable saddle point into a stable equilibrium of dynamical systems \cite{yin2019high}. 
Based on HiSD, downward and upward search algorithms have been developed to search from a known saddle point to other saddle points with different indices, thereby enabling the systematic exploration of energy landscapes and construction of solution landscapes \cite{yin2020construction,zhang2026cellsystems}. 
HiSD has been successfully applied to locate multiple saddle points in various physical systems, including liquid crystals, quasicrystals, and Bose--Einstein condensates \cite{yin2022solution,yin2021transition,yinBose}.

The convergence behavior of numerical HiSD schemes has been rigorously analyzed in recent works \cite{luo2022convergence,zhang2022error,luo2024semi,luo2026convergence}. 
Specifically, for the explicit Euler scheme, the local linear convergence rate is characterized by $1 - \mathcal{O}(1/{\kappa})$, where $\kappa$ denotes the condition number of the Hessian at the target saddle point. 
Consequently, the convergence can be substantially slowed down when the target saddle point is ill-conditioned. 
This issue becomes especially pronounced when the Hessian possesses small-magnitude eigenvalues within the unstable subspace, as the dynamics along these nearly flat directions become extremely slow. 
Although many acceleration techniques, such as Barzilai--Borwein (BB) step sizes and momentum-based methods, have been incorporated into HiSD-type algorithms, their performance can still be limited in such ill-conditioned scenarios \cite{barzilai1988bb,luo2025accelerated}. 
A large $\kappa$ can lead to slow convergence and high computational cost.
Standard Newton-type methods can, in principle, overcome the conditioning issue by exploiting second-order derivatives, while their convergence region is typically restrictive for saddle-to-saddle searches, and the computational cost is often high.

Motivated by these observations, we propose a subspace curvature-scaling HiSD (SCS-HiSD) method to accelerate HiSD for locating saddle points that are ill-conditioned due to small-magnitude negative Hessian eigenvalues.
The key idea is to exploit a distinctive feature of HiSD: the iterations already maintain approximations of the unstable Hessian eigenvectors together with their associated approximate eigenvalues, which represent the local curvature along these directions. 
These quantities are reused to construct a local inverse-Hessian approximation on the unstable subspace at negligible additional computational cost. 
This approximation is then incorporated into HiSD as an adaptive scaling: the update along each unstable direction is scaled by the reciprocal of the magnitude of the corresponding approximate eigenvalue.
We provide theoretical analysis, including linear stability of the continuous system and local convergence of the discrete scheme, showing that this scaling eliminates the dependence of the convergence rate on the smallest-magnitude negative eigenvalues, thereby substantially accelerating computation.
The method extends naturally to mitigate the convergence slowdown caused by several small positive Hessian eigenvalues of the saddle point. 
Numerical experiments are conducted to demonstrate the effectiveness of SCS-HiSD, showing that the proposed method achieves substantial acceleration, particularly in locating severely ill-conditioned saddle points.

The remainder of this paper is organized as follows. 
In \Cref{sec:2}, we briefly review the HiSD method and present a detailed description of the SCS-HiSD method. 
Linear stability analysis of the continuous SCS-HiSD is presented in \Cref{sec:3}, and the local convergence analysis of the iterative scheme is established in \Cref{sec:4}.
Numerical experiments are presented in \Cref{sec:5} to demonstrate the effectiveness of the proposed method.
Conclusions and further discussions are given in \Cref{sec:6}.

\section{SCS-HiSD method}\label{sec:2}
Let $E\in \mathcal{C}^3(\mathbb{R}^d, \mathbb{R})$ be an energy function. 
In Morse theory \cite{milnormorse}, a point $x^*$ is called an index-$k$ saddle point if it is a stationary point ($\nabla E(x^*) = 0$) and the Hessian $\nabla^2 E(x^*)$ has exactly $k$ negative eigenvalues. 
We assume that the saddle point $x^*$ is nondegenerate, i.e., the Hessian $\nabla^2 E(x^*)$ has no zero eigenvalues. 
We denote by $\|\cdot\|_2$ the Euclidean norm for vectors and the induced operator norm for matrices, and by $\langle x,y \rangle = x^\top y$ the inner product.
For symmetric matrices $A$ and $B$, we write $B \preceq A$ if $A - B$ is positive semidefinite.

\subsection{Review of HiSD}
As a surface-walking method, the HiSD for an index-$k$ saddle point ($k$-HiSD) has the form \cite{yin2019high}
\begin{equation}\label{eq:hisd}
\left\{
\begin{aligned}
\frac{\mathrm{d}x}{\mathrm{d}t} &= 
-\beta \left(I - \sum_{i=1}^{k} 2v_i v_i^{\top} \right) 
\nabla E(x),\\
\frac{\mathrm{d}v_i}{\mathrm{d}t} &= 
-\gamma \left(I - v_i v_i^{\top} - \sum_{j=1}^{i-1} 2v_j v_j^{\top} \right) \nabla^{2} E(x) v_i,
\qquad 1\leqslant i\leqslant k.
\end{aligned}
\right.
\end{equation}
Here, $x\in \mathbb{R}^d$ represents the position variable, $v_1, \dots, v_k \in \mathbb{R}^d$ are $k$ directions, and $\beta, \gamma > 0$ are relaxation parameters for the $x$- and $v_i$-updates, respectively.
Starting from an initial condition satisfying $\langle {v}_i, {v}_j\rangle= \delta_{ij}$, this dynamics preserves orthonormality throughout the evolution. 
An explicit Euler discretization of \eqref{eq:hisd} is presented in \Cref{alg:hisd}. 
The notation \texttt{EigenSol} denotes a generic eigenvector solver, such as the locally optimal block preconditioned conjugate gradient (LOBPCG) method \cite{knyazev2001lobpcg}.
It takes $\{v_i^{(n)}\}_{i=1}^k$ as initial guesses and approximates the $k$ eigenpairs corresponding to the smallest $k$ eigenvalues of $\nabla^2 E(x^{(n+1)})$.
The convergence rate and error estimates of the discrete algorithm are analyzed in \cite{zhang2022error,luo2022convergence, luo2024semi}.

\begin{algorithm}[H]
\caption{HiSD for index-$k$ saddle points}
\label{alg:hisd}
\begin{algorithmic}
\REQUIRE $k \in \mathbb{N}$, $x^{(0)} \in \mathbb{R}^d$, $\{{v}_i^{(0) }\}_{i=1}^k \subset \mathbb{R}^d$ satisfying $\langle {v}_i^{(0)}, {v}_j^{(0)}\rangle= \delta_{ij}$. 
\FOR{$n = 0, 1, \dots, N-1$}
\STATE{Update position $x$: $x^{(n+1) } = x^{(n)} - 
\beta_n \left(I - \sum\limits_{i=1}^{k} 2v_i^{(n)} v_i^{(n)\top}\right) \nabla E\left (x^{(n)}\right)$.}
\STATE{Update subspace $\mathcal{V}$: $\left\{{v}_i^{(n+1)} \right\}_{i=1}^k = \mathtt{EigenSol}\left( \left\{ {v}_i^{(n)} \right\}_{i=1}^k, \nabla^2 E\left(x^{(n+1)}\right) \right)$.}
\ENDFOR
\ENSURE $x^{(N)}$
\end{algorithmic}
\end{algorithm}

To motivate the proposed acceleration, we revisit the optimization structure underlying the dynamics \eqref{eq:hisd}. 
For a nondegenerate index-$k$ saddle point $x^*$, its Hessian matrix $\nabla^2 E(x^*)$ has exactly $k$ negative eigenvalues $\lambda_1^* \leqslant \lambda_2^* \leqslant \dots \leqslant \lambda_k^*$ with the corresponding orthonormal eigenvectors $v_1^*, v_2^*, \dots, v_k^*$. 
Then $\mathcal{V}^* = \operatorname{span}\{v_1^*, v_2^*, \dots, v_k^*\}$ is the unstable subspace at $x^*$, and its orthogonal complement $\mathcal{V}^{*\perp}$ is the stable subspace at $x^*$. 
Decomposing $x^*=x_\perp^* + x_\parallel^*$ where $x_\perp^*\in \mathcal{V}^{*\perp}$ and $x_\parallel^* \in \mathcal{V}^*$, the saddle point $x^*$ is equivalently characterized as the solution $(x_\perp = x_\perp^*, x_\parallel = x_\parallel^*)$ to the minimax problem:
\begin{equation}\label{eq:optx}
    \min_{x_\perp \in \mathcal{V}^{*\perp}} 
    \max_{x_\parallel \in \mathcal{V}^*} 
    E(x_\perp + x_\parallel).
\end{equation}
For $x$ near $x^*$, the eigenvectors corresponding to the smallest $k$ eigenvalues of the Hessian $\nabla^2 E(x)$ span a subspace that approximates $\mathcal{V}^*$.
In HiSD, these eigenvectors are approximated by orthonormal vectors $v_1, \dots, v_k$ and we denote the subspace $\mathcal{V} = \operatorname{span}\{v_i\}_{i=1}^k$.
Applying gradient ascent along $\mathcal{V}$ and gradient descent along $\mathcal{V}^{\perp}$, we obtain the dynamics of $x$ as
\begin{equation}\label{eq:minmax}
\beta^{-1} \frac{\mathrm{d}x}{\mathrm{d}t} = 
\underbrace{\left(I - \sum_{i=1}^{k} v_i v_i^{\top} \right) 
\left(-\nabla E(x)\right)}_{\text{minimization over $\mathcal{V}^{\perp}$}}
+
\underbrace{\eta\left(\sum_{i=1}^{k} v_i v_i^{\top}\right) 
\nabla E(x)}_{\text{maximization over $\mathcal{V}$}}.
\end{equation}
Here $\eta > 0$ balances the relative strengths of the ascent dynamics, and setting $\eta = 1$ recovers the $x$-dynamics in \eqref{eq:hisd}.
The $v_i$-dynamics are designed to track the eigenvectors associated with the smallest $k$ Hessian eigenvalues. 
The explicit Euler scheme of \eqref{eq:minmax} is as follows:
\begin{equation}\label{eq:reform_x}
x^{(n+1)} = x^{(n)} - \beta_n \Bigg[\left( I - \sum_{i=1}^{k} v_i^{(n)} v_i^{(n)\top} \right) + \eta_n \left( - \sum_{i=1}^{k} v_i^{(n)} v_i^{(n)\top} \right)\Bigg] \nabla E\left(x^{(n)}\right),
\end{equation}
where setting $\eta_n = 1$ recovers the position update step in \Cref{alg:hisd}.

\subsection{SCS-HiSD}
When the convergence bottleneck arises from small-magnitude negative Hessian eigenvalues, a natural remedy is to accelerate the gradient ascent along $\mathcal{V}$ using local curvature information. 
A standard Newton method requires solving linear systems involving the full Hessian matrix, which is computationally expensive. 
Instead, we construct a Hessian approximation on the subspace $\mathcal{V}$ and the corresponding inverse-Hessian approximation. 
Crucially, the $v_i$-dynamics in \eqref{eq:hisd} naturally maintain approximations of the Hessian eigenvectors $v_1, \dots, v_k \in \mathcal{V}$, together with their associated approximate eigenvalues $\alpha_i = v_i^\top \nabla^2 E(x)v_i$, which capture local curvature information along these directions. 
These quantities define a Hessian approximation on the subspace $\mathcal{V}$ as $\sum_{i=1}^k \alpha_i v_i v_i^{\top}$. 
If $\alpha_i\neq0$ for $i=1, \dots, k$, the corresponding inverse-Hessian approximation on $\mathcal{V}$ is given by $\sum_{i=1}^k \alpha_i^{-1} v_i v_i^{\top}$. 
We therefore define $G_k=\sum_{i=1}^k |\alpha_i|^{-1} v_i v_i^{\top}$ as a subspace curvature-scaling operator on $\mathcal{V}$ to scale the gradient-ascent component in \eqref{eq:minmax} and thereby accelerate the maximization over $\mathcal{V}$. 
The positive definiteness of $G_k$ on $\mathcal{V}$ ensures that the scaled gradient-ascent direction remains an ascent direction.
Combining this updated $x$-dynamics with the original $v_i$-dynamics, we obtain SCS-HiSD for an index-$k$ saddle point:
\begin{equation}\label{eq:newtonhisd}
\left\{
\begin{aligned}
\frac{\mathrm{d}x}{\mathrm{d}t} &= -\beta 
\left(I - \sum_{i=1}^{k} \left(1+\frac{\eta}{|\alpha_i|}\right)v_i v_i^{\top} \right) \nabla E(x) ,\\
\frac{\mathrm{d}v_i}{\mathrm{d}t} &= -\gamma \left( I - v_i v_i^{\top} -  \sum_{j=1}^{i-1} 2v_j v_j^{\top} \right) \nabla^2E(x)v_i,\quad 1\leqslant i\leqslant k,
\end{aligned}
\right.
\end{equation}
where $\beta, \gamma > 0$ are relaxation parameters inherited from HiSD, and $\eta > 0$ controls the strength of the ascent dynamics.

The corresponding algorithm based on explicit Euler discretization is presented in \Cref{alg:qnhisd}.
For numerical stability, the denominator $|\alpha_i^{(n)}|$ should be replaced by $|\alpha_i^{(n)}|_+:=\max\{|\alpha_i^{(n)}|, \varepsilon\}$ in practical computations, where $\varepsilon > 0$ is a small positive lower bound.
A key advantage of SCS-HiSD is that it leverages the $v_i$ directions already computed during the iterations, thereby introducing negligible additional computational cost.
The effectiveness of this acceleration strategy will be demonstrated both theoretically and numerically in the following sections.

\begin{algorithm}[H]
\caption{SCS-HiSD for index-$k$ saddle points}
\label{alg:qnhisd}
\begin{algorithmic}
\REQUIRE $k \in \mathbb{N}$, $x^{(0)} \in \mathbb{R}^d$, $\{{v}_i^{(0) }\}_{i=1}^k \subset \mathbb{R}^d$ satisfying $\langle {v}_i^{(0)}, {v}_j^{(0)}\rangle= \delta_{ij}$. 
\FOR{$n = 0, 1, \dots, N-1$}
\STATE{Update position $x$: $\alpha_i^{(n)} = v_i^{(n)\top}\nabla^2 E\left(x^{(n)}\right)v_i^{(n)}$,
\\
$\quad
x^{(n+1)} = x^{(n)} - 
\beta_n \left(I - \displaystyle\sum\limits_{i=1}^{k} 
\Bigg(1+ \dfrac{\eta_n}{|\alpha_i^{(n)}|}\Bigg) 
v_i^{(n)} v_i^{(n)\top}\right) \nabla E\left (x^{(n)}\right)$.}
\STATE{Update subspace $\mathcal{V}$: $\left\{{v}_i^{(n+1)} \right\}_{i=1}^k = \mathtt{EigenSol}\left( \left\{ {v}_i^{(n)} \right\}_{i=1}^k, \nabla^2 E\left(x^{(n+1)}\right) \right)$.}
\ENDFOR
\ENSURE $x^{(N)}$
\end{algorithmic}
\end{algorithm}

\subsection{SCS-HiSD with additional directions}
The SCS-HiSD method \eqref{eq:newtonhisd} is designed to address the slow convergence caused by small-magnitude negative Hessian eigenvalues.
In more general settings, several positive eigenvalues may also be close to zero, and the SCS-HiSD method can be extended to address the corresponding convergence slowdown.
Specifically, when searching for an index-$k$ saddle point in the presence of $l$ small positive eigenvalues, the numerical scheme of SCS-HiSD is augmented by incorporating $l$ additional directions $v_{k+1}, \dots, v_{k+l}$ associated with the smallest $l$ positive eigenvalues.
In this extended framework, the energy is maximized along the first $k$ directions, $v_1, \dots, v_k$, and minimized along the remaining orthogonal directions, with $v_{k+1}, \dots, v_{k+l}$ incorporated under the same curvature-scaling principle.
Define $\sigma_{ik}=\begin{cases}
    +1, & i\leqslant k, \\ 
    -1, & i>k,
\end{cases}$
which distinguishes ascent and descent directions.
The SCS-HiSD for an index-$k$ saddle point with $l$ additional directions is given by
\begin{equation}\label{eq:extendedkl}
\left\{
\begin{aligned}
\frac{\mathrm{d}x}{\mathrm{d}t} &= -\beta 
\left(I - \sum_{i=1}^{k+l} \left(1+\frac{\sigma_{ik}\eta}{|\alpha_i|}\right)v_i v_i^{\top} \right) \nabla E(x) ,\\
\frac{\mathrm{d}v_i}{\mathrm{d}t} &= -\gamma \left( I - v_i v_i^{\top} -  \sum_{j=1}^{i-1} 2v_j v_j^{\top} \right) \nabla^2E(x)v_i,\quad 1\leqslant i\leqslant k+l.\\
\end{aligned}
\right.
\end{equation}
Accordingly, the position update in the numerical scheme is modified as 
\begin{equation}\label{eq:posi_pqn}
x^{(n+1)} = x^{(n)} -\beta_n 
\left(I - \sum_{i=1}^{k+l} 
\left(1 + \frac{\sigma_{ik}\eta_n}{|\alpha_i^{(n)}|} \right)
v_i^{(n)} v_i^{(n)\top}\right) \nabla E(x^{(n)}).
\end{equation}

In practical computations, the parameter $l$ is typically chosen as a small integer to control the additional computational cost. 
In the numerical experiments, we adopt this extended scheme when the smallest positive eigenvalue is close to zero. 
For the subsequent theoretical analysis, we focus on the original SCS-HiSD formulation \eqref{eq:newtonhisd}.

\section{Linear stability of continuous dynamics}\label{sec:3}
In this section, we clarify the relationship between the stationary points of the continuous SCS-HiSD system \eqref{eq:newtonhisd} and the saddle points of the energy landscape. 
Specifically, we demonstrate that an index-$k$ saddle point of the energy function $E$ corresponds to a linearly stable equilibrium of the dynamical system \eqref{eq:newtonhisd}, thereby establishing the validity of the proposed method in locating high-index saddle points. 

\begin{theorem}
For $E \in C^3(\mathbb{R}^d)$ and $x^* \in \mathbb{R}^d$, assume that the Hessian matrix $H^* = \nabla^2 E(x^*)$ is nondegenerate, with eigenvalues $\lambda_1^* < \dots < \lambda_k^* < \lambda_{k+1}^* \leqslant \dots \leqslant \lambda_d^*$. 
Suppose that $\{v_i^*\}_{i=1}^k \subset \mathbb{R}^d$ satisfy $\|v_i^*\| = 1$, and that $\beta, \gamma, \eta > 0$.
Then $(x^*, v_1^*, \dots, v_k^*)$ is a linearly stable equilibrium point of the dynamical system \eqref{eq:newtonhisd} if and only if $x^*$ is an index-$k$ saddle point of $E$ and each $v_i^*$ is the eigenvector of $H^*$ corresponding to the eigenvalue $\lambda_i^*$.
\end{theorem}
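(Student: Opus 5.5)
We follow the standard linear-stability argument for HiSD in \cite{yin2019high}, adapted to the scaled $x$-dynamics. Write $P_i = v_iv_i^\top$, $\alpha_i = v_i^\top \nabla^2E(x)v_i$, and $W(x,v) = I - \sum_{i=1}^k (1+\eta/|\alpha_i|)P_i$.

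\textbf{Step 1: equilibrium conditions.} At an equilibrium we first use the $v$-equations. For $i=1$, $(I-v_1v_1^\top)H^*v_1=0$, so $v_1$ is an eigenvector of $H^*$. Inductively, if $v_1,\dots,v_{i-1}$ are orthonormal eigenvectors, then projecting the $v_i$-equation onto $v_j$ for $j<i$ gives $v_j^\top H^* v_i = -\lambda_{(j)} v_j^\top v_i$. Since also $v_j^\top H^*v_i = \lambda_{(j)}v_j^\top v_i$, we get $v_j^\top v_i=0$ whenever $\lambda_{(j)}\neq 0$, which holds by nondegeneracy. Hence $v_i$ is an eigenvector orthogonal to the previous ones, and $\{v_i\}$ are orthonormal eigenvectors of $H^*$ with $\alpha_i$ equal to the corresponding nonzero eigenvalues, so $|\alpha_i|>0$ and the dynamics are well defined. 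In this orthonormal eigenbasis, $W$ is diagonal with entries $-\eta/|\alpha_i|\neq 0$ on $\mathcal V$ and $1$ on $\mathcal V^\perp$. Thus $W$ is invertible and the $x$-equation forces $\nabla E(x^*)=0$.

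\textbf{Step 2: Jacobian structure.} Linearize at such an equilibrium in the variables $(x,v_1,\dots,v_k)$. Because $\nabla E(x^*)=0$, the derivatives of $W$ with respect to $v$ and $x$ (including those through $\alpha_i$, which is where $E\in C^3$ enters) multiply $\nabla E(x^*)$ and vanish. Hence $\partial \dot x/\partial v = 0$, and the Jacobian is block lower triangular, with diagonal blocks $J_{xx}=-\beta W^* H^*$ and the $v$-block of standard HiSD. The $v$-block is exactly the one analysed in \cite{yin2019high}. It is itself block triangular in $v_1,\dots,v_k$, and its eigenvalues on the tangent directions have negative real part if and only if $v_i$ corresponds to $\lambda_i^*$, the $i$-th smallest eigenvalue. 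The strict inequalities $\lambda_1^*<\dots<\lambda_{k+1}^*$ make this ordering strict. The normal direction $v_i$ itself gives eigenvalue $\gamma(\alpha_i-\dots)$ type terms as in HiSD, and we reuse that computation, restricted to the invariant set $\|v_i\|=1$, as the paper's statement implicitly assumes.

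\textbf{Step 3: the $x$-block.} In the eigenbasis, $W^*H^*$ is diagonal with entries $-\eta\lambda_{(i)}/|\lambda_{(i)}|$ for the eigenvalues $\lambda_{(i)}$ attached to $v_1,\dots,v_k$, and $\lambda_j$ for eigenvalues whose eigenvectors lie in $\mathcal V^\perp$. So $J_{xx}$ has eigenvalues $\beta\eta\,\mathrm{sign}(\lambda_{(i)})$ and $-\beta\lambda_j$. All are negative exactly when the $v_i$ carry the negative eigenvalues and every remaining eigenvalue is positive, i.e.\ $x^*$ is index-$k$ with $\mathcal V=\mathcal V^*$. Notably the rate along $\mathcal V$ becomes $\beta\eta$, independent of $|\lambda_i^*|$, which is the intended effect of the scaling. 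Combining with Step 2 gives both directions of the equivalence. For the converse, if $x^*$ is an index-$k$ saddle and $v_i^*$ are the ordered eigenvectors, all diagonal-block spectra are in the open left half plane.

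\textbf{Main obstacle.} The delicate point is the $v$-block: handling the normalization and the ``only if'' direction rigorously, showing that any mis-ordering of eigenvectors yields a nonnegative Jacobian eigenvalue. I would import this argument verbatim from the HiSD stability theorem, since the $v$-dynamics are unchanged. The only genuinely new checks are the vanishing of the $x$–$v$ coupling, which uses $\nabla E=0$, and the differentiability of $1/|\alpha_i|$ near the equilibrium, which holds because $\alpha_i\neq0$ there.
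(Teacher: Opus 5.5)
Your route is the paper's: at an equilibrium, $\nabla E(x^*)=\mathbf{0}$ kills the $x$--$v$ coupling, so the Jacobian is block lower triangular. The $x$-block has eigenvalues $\beta\eta\,\mathrm{sign}(\mu_i^*)$ and $-\beta\mu_j^*$, and the $v$-blocks are those of HiSD. Your remark that $W$ is invertible at the equilibrium, which is needed to pass from $\dot x=\mathbf{0}$ to $\nabla E(x^*)=\mathbf{0}$, is a detail the paper leaves implicit.

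There is, however, a concrete error in Step~1. Using $H^*v_j=\lambda_{(j)}v_j$ and symmetry, the $v_i$-equation reads $H^*v_i=\alpha_i v_i+2\sum_{j<i}\lambda_{(j)}(v_j^\top v_i)\,v_j$. Projecting onto $v_j$ ($j<i$) does \emph{not} give $v_j^\top H^*v_i=-\lambda_{(j)}v_j^\top v_i$. The term $\alpha_i v_i$ contributes $\alpha_i\,v_j^\top v_i$, and the correct identity is $v_j^\top H^*v_i=-\alpha_i\,v_j^\top v_i$, i.e.\ $(\lambda_{(j)}+\alpha_i)\,v_j^\top v_i=0$. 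Nondegeneracy does not exclude $\alpha_i=-\lambda_{(j)}$ (for instance, $H^*$ may have a pair of eigenvalues $\pm\lambda$). So orthogonality does not follow as you wrote it.

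The repair is short. Also project the same equation onto $v_i$, which gives $\sum_{j<i}\lambda_{(j)}(v_j^\top v_i)^2=0$. Every $j$ with $v_j^\top v_i\neq0$ has $\lambda_{(j)}=-\alpha_i$, so $\alpha_i\sum_{j<i}(v_j^\top v_i)^2=0$. If some $v_j^\top v_i\neq0$, this forces $\alpha_i=0$ and hence $\lambda_{(j)}=0$, contradicting nondegeneracy. So $v_i\perp v_1,\dots,v_{i-1}$, and then $H^*v_i=\alpha_iv_i$. The paper's step ``$v_m^*$ is an eigenvector of the deflated matrix, hence of $H^*$'' implicitly needs the same Rayleigh-quotient check to rule out this $\pm$ pairing.

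A smaller point: the $v$-block is not really an obstacle, and it needs no restriction to $\|v_i\|=1$. Write $\mu_j^*$ for the eigenvalue attached to $v_j^*$ (your $\lambda_{(j)}$). At the equilibrium, $\mathbf{J}_i=-\gamma\bigl(H^*-\sum_{j\leqslant i}2\mu_j^*v_j^*v_j^{*\top}-\mu_i^*I\bigr)$ is diagonal in an eigenbasis of $H^*$ containing $v_1^*,\dots,v_k^*$. Its eigenvalues are $\gamma(\mu_i^*+\mu_j^*)$ for $j\leqslant i$, with the normal direction giving $2\gamma\mu_i^*$, and $\gamma(\mu_i^*-\mu_j^*)$ for $j>i$. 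In the full space these are all negative at an index-$k$ saddle with correctly ordered eigenvectors. Conversely, negativity of $\gamma(\mu_i^*-\mu_j^*)$, together with the signs from the $x$-block, yields $\mu_i^*=\lambda_i^*$. Restricting to the sphere would only give stability within the constraint set, which is weaker than the full-space linear stability the paper proves.
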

\begin{proof}

Consider the Jacobian $\mathbf{J}$ of \eqref{eq:newtonhisd} defined by
\begin{equation}
\mathbf{J} = \frac{\partial(\dot{x}, \dot{v}_1, \dot{v}_2, \dots, \dot{v}_k)}{\partial(x, v_1, v_2, \dots, v_k)} = \begin{pmatrix}
\mathbf{J}_x & \mathbf{J}_{x1} & \mathbf{J}_{x2} & \dots & \mathbf{J}_{xk} \\
\mathbf{J}_{1x} & \mathbf{J}_1 & \mathbf{O} & \dots & \mathbf{O} \\
\mathbf{J}_{2x} & \mathbf{J}_{21} & \mathbf{J}_2 & \dots & \mathbf{O} \\
\vdots & \vdots & \vdots & \ddots & \vdots \\
\mathbf{J}_{kx} & \mathbf{J}_{k1} & \mathbf{J}_{k2} & \dots & \mathbf{J}_k
\end{pmatrix}
\end{equation}
where the block components are explicitly given by
\begin{align}
\mathbf{J}_x = \frac{\partial \dot{x}}{\partial x} &= 
-\beta \left(I - \sum_{i=1}^k \left(1+\frac{\eta}{|\alpha_i|}\right) v_i v_i^\top \right) \nabla^2 E(x) 
+ \beta \partial_x \left( \sum_{i=1}^k \frac{\eta}{|\alpha_i|} v_i v_i^\top \right) \nabla E(x), \notag \\
\mathbf{J}_{xi} = \frac{\partial \dot{x}}{\partial v_i} &= 
\beta \left(1+\frac{\eta}{|\alpha_i|}\right)\left( v_i^\top \nabla E(x) I + v_i \nabla E(x)^\top \right) + \beta \partial_{v_i} \left( \frac{\eta}{|\alpha_i|} v_i v_i^\top \right) \nabla E(x), \notag\\
\mathbf{J}_i = \frac{\partial \dot{v}_i}{\partial v_i} &= -\gamma \left( I -  \sum_{j=1}^{i} 2 v_j v_j^\top \right) \nabla^2 E(x) 
+ \gamma  \alpha_i I , \notag
\end{align}
and $\alpha_i = v_i^\top \nabla^2 E(x) v_i$.
Note that $\mathbf{J}_{xi}$ vanishes if $\nabla E(x) = \mathbf{0}$.

``$\Leftarrow$''. Suppose that $x^*$ is an index-$k$ saddle point of $E$, and let $(\lambda_i^*, v_i^*)$ be the corresponding eigenpairs of $H^*$. 
Then, we have $\nabla E(x^*) = \mathbf{0}$ and $H^* v_i^* = \lambda_i^* v_i^*$, so $(x^*, v_1^*, \dots, v_k^*)$ is an equilibrium point of the system \eqref{eq:newtonhisd}.
By the nondegeneracy of the Hessian $H^*$, $\alpha_i$ remains negative in a neighborhood of $(x^*, v_1^*, \dots, v_k^*)$, and the system \eqref{eq:newtonhisd} is well defined and continuously differentiable.

To establish linear stability, we examine the eigenvalues of $\mathbf{J}^* = \mathbf{J}(x^*)$, which is a block lower triangular matrix due to $\nabla E(x^*) = \mathbf{0}$.
The first diagonal block,
$\mathbf{J}_x(x^*, v_1^*, \dots, v_k^*) = -\beta \Big(H^* - \sum_{i=1}^k (\lambda_i^*-\eta) v_i^* {v_i^*}^\top \Big)$,
has eigenvalues $-\beta \eta$ (multiplicity $k$), $-\beta \lambda_{k+1}^*$, $\dots$, $-\beta \lambda_d^*$. 
The other diagonal blocks,
$\mathbf{J}_i(x^*, v_1^*, \dots, v_k^*) = -\gamma \Big( H^* - \sum_{j=1}^i 2\lambda_j^* v_j^* {v_j^*}^\top - \lambda_i^* I \Big)$,
have eigenvalues $\gamma(\lambda_i^* + \lambda_1^*)$, $\dots$, $\gamma(\lambda_i^* + \lambda_i^*)$, $\gamma(\lambda_i^* - \lambda_{i+1}^*)$, $\dots$, $\gamma(\lambda_i^* - \lambda_d^*)$. Since all eigenvalues of $\mathbf{J}^*$ are negative, we conclude that $(x^*, v_1^*, \dots, v_k^*)$ is linearly stable.

``$\Rightarrow$''. Suppose $(x^*, v_1^*, \dots, v_k^*)$ is a linearly stable equilibrium point, so $\frac{\mathrm{d}v_i}{\mathrm{d}t} = \mathbf{0}$. 
Define $\mu_i^* = \langle v_i^*, H^* v_i^*\rangle$, and we prove by induction that for $i = 1, \dots, k$, the following relationships hold:
\begin{equation}\label{eq:24}
H^* v_i^* = \mu_i^* v_i^*, \quad \langle v_j^*, v_i^*\rangle = \delta_{ij}, \quad j = 1, 2, \dots, i - 1.
\end{equation}
From $\frac{\mathrm{d}v_1}{\mathrm{d}t} = \mathbf{0}$, we obtain $H^* v_1^* = \mu_1^* v_1^*$. 
Assuming that \eqref{eq:24} holds for $i \leqslant m - 1$, from $\frac{\mathrm{d}v_m}{\mathrm{d}t} = \mathbf{0}$, we obtain
\begin{equation}\label{eq:25}
\left(H^* - \sum_{j=1}^{m-1} 2\mu_j^* v_j^* {v_j^*}^\top\right) v_m^* = \mu_m^* v_m^*.
\end{equation}
Since $v_1^*, \dots, v_{m-1}^*$ are eigenvectors of $H^*$, the matrix $H^* - \sum_{j=1}^{m-1} 2\mu_j^* v_j^* {v_j^*}^\top$ shares the same eigenvectors as $H^*$, so $v_m^*$ is also an eigenvector of $H^*$ with the eigenvalue $\mu_m^*$. 
From $H^*v_m^* = \mu_m^* v_m^*$ and \eqref{eq:25}, we deduce that
$\sum_{j=1}^{m-1} \mu_j^* \langle v_j^*, v_m^* \rangle v_j^* = \mathbf{0}$. 
Since $\{v_j^*\}_{j=1}^{m-1}$ are orthogonal and $\mu_j^* \neq 0$ from nondegeneracy, it follows that
$\langle v_j^*, v_m^* \rangle = 0$ for $j < m$, which proves \eqref{eq:24}.
From $\frac{\mathrm{d}x}{\mathrm{d}t} = \mathbf{0}$, we obtain  
$\nabla E(x^*) = \mathbf{0}$, so $x^*$ is a stationary point of $E$.

In a neighborhood of $(x^*, v_1^*, \dots, v_k^*)$, $\alpha_i$ remains nonzero and the system \eqref{eq:newtonhisd} is well-defined and continuously differentiable.
We consider the eigenvalues of $\mathbf{J}^*=\mathbf{J}(x^*)$, which is a block lower triangular matrix due to $\nabla E(x^*) = \mathbf{0}$. 
Therefore, all eigenvalues of its diagonal blocks must be negative because of the linear stability.

Since $\{(\mu_j^*, v_j^*)\}_{j=1}^k$ are eigenpairs of $H^*$, we denote the other eigenvalues of $H^*$ as $\mu_{k+1}^* \leqslant \dots \leqslant \mu_d^*$.
The eigenvalues of the first diagonal block
\begin{equation}
\mathbf{J}_{x}(x^*, v_1^*, \dots, v_k^*) = -\beta 
\Big(H^* - \sum_{i=1}^k 
\Big(1+\frac{\eta}{|\mu_i^*|}\Big)
\mu_i^* v_i^* {v_i^*}^\top
\Big),
\end{equation}
are $\beta\eta \frac{\mu_1^*}{|\mu_1^*|}, \dots, \beta\eta \frac{\mu_k^*}{|\mu_k^*|}, -\beta\mu_{k+1}^*, \dots, -\beta\mu_d^*$, all of which should be negative.
Therefore, $\{\mu_i^*\}_{i=1}^k$ are negative and $\{\mu_i^*\}_{i=k+1}^d$ are positive, which implies that $x^*$ is an index-$k$ saddle point.
The eigenvalues of the diagonal block
\begin{equation}
\mathbf{J}_i(x^*, v_1^*, \dots, v_k^*) = -\gamma 
\Big(H^* - \sum_{j=1}^i 2\mu_j^* v_j^* {v_j^*}^\top -\mu_i^* I \Big),
\end{equation}
are $\gamma(\mu_i^* + \mu_1^*), \dots, \gamma(\mu_i^* + \mu_i^*), \gamma(\mu_i^* - \mu_{i+1}^*), \dots, \gamma(\mu_i^* - \mu_d^*)$, all of which should be negative. 
Therefore, we have $\mu_i^* < \mu_{i+1}^*$ for $i = 1, \dots, k$, implying that $\mu_i^*=\lambda_i^*$. 
\end{proof}

\section{Local convergence analysis}\label{sec:4}
In this section, we demonstrate that the explicit Euler scheme of the SCS-HiSD method exhibits a significantly faster convergence rate than the original HiSD method, particularly in ill-conditioned saddle-point problems. 
We aim to establish a linear convergence rate that is independent of the small-magnitude negative Hessian eigenvalues. 
We make the following assumption for the convergence analysis:
\begin{assumption}\label{ass:1}
The initial position $x^{(0)}$ lies within a neighborhood of an index-$k$ saddle point $x^*$, i.e., 
$x^{(0)} \in U(x^*, \delta) = \{ x\in\mathbb{R}^d \mid \| x - x^* \|_2 < \delta \}$,  $\delta > 0$. 
Furthermore, the following conditions hold:

(a) Lipschitz continuity: The Hessian matrix $\nabla^2 E$ is Lipschitz continuous in $U(x^*, \delta)$, that is, there exists a constant $M > 0$ such that for any $x, y \in U(x^*, \delta)$, $\| \nabla^2 E(x) - \nabla^2 E(y) \|_2 \leqslant M \| x - y \|_2$.

(b) Spectral bounds: For any $x \in U(x^*, \delta)$, the eigenvalues $\{ \lambda_i \}_{i=1}^d$ of $\nabla^2 E(x)$ satisfy $\lambda_1 \leqslant \dots \leqslant \lambda_k < 0 < \lambda_{k+1} \leqslant \dots \leqslant \lambda_d$. 
There exist positive constants $0 < \epsilon < \mu < L$ such that $|\lambda_i| \in [\epsilon, L]$ for $1 \leqslant i \leqslant k$, and $|\lambda_i| \in [\mu, L]$ for $k+1 \leqslant i \leqslant d$. 
\end{assumption}

To establish a rigorous framework for the convergence analysis, we introduce two auxiliary lemmas. 
The following lemma underlies the convergence proofs of various first-order optimization algorithms, such as gradient descent \cite{Nesterov2004introductory}. 
Essentially, it exploits the properties of contraction mappings to characterize the decay of an iterative sequence.

\begin{lemma}\cite{luo2022convergence}\label{lem:series} 
Assume that the nonnegative sequence $\{r_n\}_{n\geqslant 0}$ satisfies the following recurrence inequality: 
\begin{equation} 
    r_{n+1} \leqslant (1-q) r_n + c r_n^2, \qquad n\geqslant 0,\quad q\in(0,1) ,\ c>0. 
\end{equation} 

\noindent
(a) If $r_n < q/c$ for some $n \geqslant 0$, then $r_{n+1} < r_n < q/c$. 

\noindent
(b) If $r_0 < q/c$, then $r_{n} \leqslant 
\big(\frac{1}{1+q}\big)^{n} \frac{q r_0}{q - c r_0}$ for all $n\geqslant 0$.
\end{lemma}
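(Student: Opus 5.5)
Both parts follow from an elementary induction on the recurrence; no earlier result of the paper is needed. The idea is to compare $r_{n+1}$ with $r_n$ using the smallness condition $c r_n < q$.

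For part (a), suppose $r_n < q/c$. Rewrite the recurrence as
\[
r_{n+1} \leqslant r_n\,(1 - q + c r_n).
\]
Since $c r_n < q$, the factor satisfies $1 - q + c r_n < 1$. If $r_n > 0$, this gives $r_{n+1} < r_n < q/c$ directly. If $r_n = 0$, the recurrence forces $r_{n+1} \leqslant 0$, hence $r_{n+1} = 0$. In that degenerate case the strict inequality $r_{n+1} < r_n$ must be read as $r_{n+1} \leqslant r_n$, which is all that is used later. I would note this in a remark.

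For part (b), first apply (a) inductively: $r_0 < q/c$ gives $r_n \leqslant r_0 < q/c$ for all $n$. If some $r_n$ vanishes, all later terms vanish and the bound is trivial, so assume $r_n > 0$ throughout. The natural device is to pass to reciprocals.

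1. From $r_{n+1} \leqslant r_n(1 - q + c r_n)$, with the factor in $(0,1]$ positive, we get
\[
\frac{1}{r_{n+1}} \geqslant \frac{1}{r_n(1 - q + c r_n)}.
\]
2. Use $\frac{1}{1-s} \geqslant 1 + s$ with $s = q - c r_n \in (0,1)$ to obtain
\[
\frac{1}{r_{n+1}} \geqslant \frac{1 + q - c r_n}{r_n} = \frac{1+q}{r_n} - c.
\]
3. Set $u_n = 1/r_n$, so that $u_{n+1} \geqslant (1+q)u_n - c$. The fixed point of this linear recursion is $c/q$, and subtracting it gives
\[
u_{n+1} - \frac{c}{q} \geqslant (1+q)\Bigl(u_n - \frac{c}{q}\Bigr).
\]
4. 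The quantity $u_0 - c/q = \frac{q - c r_0}{q r_0}$ is positive, so iterating yields
\[
u_n - \frac{c}{q} \geqslant (1+q)^n \, \frac{q - c r_0}{q r_0}.
\]
5. Drop the positive term $c/q$ on the left and invert:
\[
r_n \leqslant \Bigl(\frac{1}{1+q}\Bigr)^n \frac{q r_0}{q - c r_0}.
\]

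The only delicate point is step 2. It requires the factor $1 - q + c r_n$ to lie strictly in $(0,1)$, which needs both $c r_n < q$, supplied by (a), and $q < 1$, supplied by the hypothesis.

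Apart from that, the proof hinges on choosing the reciprocal substitution and the shifted linear recursion. Working with $u_n - c/q$ is what produces both the exact factor $(1+q)^{-n}$ and the constant $\frac{q r_0}{q - c r_0}$ in the claimed bound.
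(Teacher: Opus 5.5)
Your proof is correct and follows essentially the same route as the standard argument behind this lemma. The paper gives no proof of its own and defers to \cite{luo2022convergence}, which in turn follows Nesterov's local-convergence argument: bound $r_{n+1}\leqslant r_n/(1+q-cr_n)$, pass to reciprocals, and iterate the shifted recursion $1/r_{n+1}-c/q\geqslant(1+q)(1/r_n-c/q)$. Your caveat is a legitimate minor correction to the statement as written: when $r_n=0$, the strict inequality in (a) fails, since then $r_{n+1}=r_n=0$. Relatedly, in (b) the reciprocal argument should be run only up to the first index at which $r_n$ vanishes.
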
 

The second lemma characterizes the fundamental properties of the subspace distance metric defined by orthogonal projections. 
Since the accuracy of eigenvector approximations in numerical computations plays a critical role in determining the convergence rate of HiSD, a rigorous characterization of subspace distance is essential for the subsequent convergence analysis.

\begin{lemma}\cite[Theorem 2.5.1]{golub2013matrix}\label{lem:proj-gap}
If $W=[ W_1,\; W_2 ]$, $Z=[ Z_1,\; Z_2 ]\in \mathbb{R}^{n\times n}$ are orthogonal matrices, where $W_1, Z_1 \in \mathbb{R}^{n\times k}$, and $W_2, Z_2 \in \mathbb{R}^{n\times(n-k)}$, then 
\begin{equation*}
\left\|W_1 W_1^{\top}-Z_1 Z_1^{\top}\right\|_2
= \left\|W_1^{\top} Z_2\right\|_2
= \left\|Z_1^{\top} W_2\right\|_2.
\end{equation*}
\end{lemma}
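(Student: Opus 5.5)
The plan is to prove both equalities by reducing the projector difference to a block structure in the orthonormal basis given by the columns of $W$. Since $W$ and $Z$ are orthogonal, unitary invariance of the spectral norm gives
\begin{equation*}
\left\|W_1W_1^\top - Z_1Z_1^\top\right\|_2 = \left\|W^\top\left(W_1W_1^\top - Z_1Z_1^\top\right)Z\right\|_2 .
\end{equation*}
Using $W^\top W_1 = [I;\,0]$ and $Z_1^\top Z = [I,\,0]$, a direct block computation gives
\begin{equation*}
W^\top\left(W_1W_1^\top - Z_1Z_1^\top\right)Z
= \begin{pmatrix} W_1^\top Z_1 & W_1^\top Z_2\\ 0 & 0\end{pmatrix} - \begin{pmatrix} W_1^\top Z_1 & 0\\ W_2^\top Z_1 & 0\end{pmatrix}
= \begin{pmatrix} 0 & W_1^\top Z_2\\ -W_2^\top Z_1 & 0\end{pmatrix}.
\end{equation*}
The spectral norm of an anti-diagonal block matrix is the maximum of the norms of its off-diagonal blocks. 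Hence the left-hand side equals $\max\{\|W_1^\top Z_2\|_2,\ \|W_2^\top Z_1\|_2\}$, and $\|W_2^\top Z_1\|_2 = \|Z_1^\top W_2\|_2$ by transposition.

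It then remains to show $\|W_1^\top Z_2\|_2 = \|Z_1^\top W_2\|_2$. I would use the CS-type identity coming from orthogonality of $Q = W^\top Z = \begin{pmatrix} Q_{11} & Q_{12}\\ Q_{21} & Q_{22}\end{pmatrix}$, where $Q_{11} = W_1^\top Z_1$ is $k\times k$, $Q_{12} = W_1^\top Z_2$, and $Q_{21} = W_2^\top Z_1$. The first block row has orthonormal rows, so $Q_{12}Q_{12}^\top = I - Q_{11}Q_{11}^\top$. The first block column has orthonormal columns, so $Q_{21}^\top Q_{21} = I - Q_{11}^\top Q_{11}$. Therefore
\begin{equation*}
\|Q_{12}\|_2^2 = 1 - \sigma_{\min}(Q_{11})^2 = \|Q_{21}\|_2^2 ,
\end{equation*}
because $Q_{11}Q_{11}^\top$ and $Q_{11}^\top Q_{11}$ are square $k\times k$ matrices with the same eigenvalues. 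This gives both equalities.

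The main obstacle is this last step, the equality of the two off-diagonal norms. The key point is that $Q_{11}$ is square, since both $W_1$ and $Z_1$ have exactly $k$ columns; this is what lets the smallest singular value of $Q_{11}$ control both $Q_{12}$ and $Q_{21}$. The remaining manipulations are routine.
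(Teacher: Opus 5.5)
Your proof is correct and is essentially the argument in the cited source, Golub--Van Loan Theorem 2.5.1; the paper gives no proof of its own. That argument also uses orthogonal invariance to reduce $W_1W_1^\top - Z_1Z_1^\top$ to the anti-diagonal block matrix with blocks $W_1^\top Z_2$ and $-W_2^\top Z_1$, and then uses orthogonality of $Q = W^\top Z$ with square $Q_{11}$ to get $\|Q_{12}\|_2^2 = 1-\sigma_{\min}(Q_{11})^2 = \|Q_{21}\|_2^2$. The only cosmetic difference is that you read this identity off the block Gram relations, whereas Golub--Van Loan phrase it as $\|Q_{11}x\|_2^2 + \|Q_{21}x\|_2^2 = 1$ for unit $x$ and then repeat the argument with $Q^\top$.
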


\subsection{Cases with exact eigenvectors}
In this subsection, we assume that the exact $k$ orthonormal eigenvectors, denoted by $\{u_i^{(n)}\}_{i=1}^k$, corresponding to the smallest $k$ eigenvalues $\{\lambda_i^{(n)}\}_{i=1}^k$ of $\nabla^2 E(x^{(n)})$, are applied at the $n$-th iteration step, and the iteration scheme of SCS-HiSD is formulated as follows:
\begin{equation}\label{eq:exact_iter}
x^{(n+1)} 
=x^{(n)} - \beta_n 
\left( I - \sum_{i=1}^k\left(1+\zeta_i^{(n)}\right) u_i^{(n)} u_i^{(n)\top} \right)  \nabla E(x^{(n)}),
\end{equation}
where $\zeta_i^{(n)}=\eta_n/|\lambda_i^{(n)}| = - \eta_n/\lambda_i^{(n)}$. 
For the one-step iteration \eqref{eq:exact_iter}, we have the following theorem. 

\begin{theorem}\label{thm:exact-eig}
Under Assumption~\ref{ass:1}, if for some $n \geqslant 0$, $r_n = \|x^{(n)} - x^*\|_2 < \delta$ holds for \eqref{eq:exact_iter} with $\beta_n = {2}/{(L+\mu)}$ and $\eta_n=\mu$, then the following estimate holds:
\begin{equation}\label{eq:exact_series}
    r_{n+1} = \|x^{(n+1)} - x^*\|_2 \leqslant \left(1 - \frac{2\mu}{L+\mu} \right)  r_n + \frac{\mu M}{(L+\mu)\epsilon} r_n^2.
\end{equation}
\end{theorem}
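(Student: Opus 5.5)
Write $H_n=\nabla^2 E(x^{(n)})$, $e_n=x^{(n)}-x^*$ and $P_n=I-\sum_{i=1}^k(1+\zeta_i^{(n)})u_i^{(n)}u_i^{(n)\top}$. The plan is to split the one-step error into a linear contraction part plus a Taylor remainder. Since $\nabla E(x^*)=0$, the integral form of the mean value theorem gives
\begin{equation*}
\nabla E(x^{(n)})=H_n e_n - R_n,\qquad R_n=\int_0^1\big(H_n-\nabla^2E(x^*+te_n)\big)e_n\,\mathrm{d}t .
\end{equation*}
By Assumption~\ref{ass:1}(a), and because the segment from $x^*$ to $x^{(n)}$ lies in $U(x^*,\delta)$, we get $\|R_n\|_2\leqslant M\int_0^1(1-t)\,\mathrm{d}t\,r_n^2=\tfrac{M}{2}r_n^2$. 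Hence
\begin{equation*}
e_{n+1}=(I-\beta_nP_nH_n)e_n+\beta_nP_nR_n .
\end{equation*}
It remains to bound $\|I-\beta_nP_nH_n\|_2$ and $\|P_n\|_2$.

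The key point is that the $u_i^{(n)}$ are exact eigenvectors of $H_n$. So $P_n$ and $H_n$ are simultaneously diagonalized in the eigenbasis of $H_n$, and $P_nH_n$ is symmetric. On $u_i^{(n)}$ with $i\leqslant k$, $P_n$ acts as $-\zeta_i^{(n)}=\eta_n/\lambda_i^{(n)}$. Therefore $P_nH_n u_i^{(n)}=\eta_n u_i^{(n)}=\mu\,u_i^{(n)}$, which is exactly where the small eigenvalues get cancelled. On the remaining eigenvectors, $P_n$ is the identity and $P_nH_n$ has eigenvalues $\lambda_j^{(n)}\in[\mu,L]$. Thus all eigenvalues of $P_nH_n$ lie in $[\mu,L]$. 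With $\beta_n=2/(L+\mu)$, the eigenvalues of $I-\beta_nP_nH_n$ lie in $[-(L-\mu)/(L+\mu),\,(L-\mu)/(L+\mu)]$. This gives $\|I-\beta_nP_nH_n\|_2\leqslant 1-\tfrac{2\mu}{L+\mu}$, the standard gradient-descent contraction factor.

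For the remainder term, $\|P_n\|_2=\max\{1,\max_i \mu/|\lambda_i^{(n)}|\}\leqslant \mu/\epsilon$, using $|\lambda_i^{(n)}|\geqslant\epsilon$ from Assumption~\ref{ass:1}(b) together with $\epsilon<\mu$. Then
\begin{equation*}
\|\beta_nP_nR_n\|_2\leqslant \frac{2}{L+\mu}\cdot\frac{\mu}{\epsilon}\cdot\frac{M}{2}r_n^2=\frac{\mu M}{(L+\mu)\epsilon}r_n^2 .
\end{equation*}
Combining the two bounds with the triangle inequality yields \eqref{eq:exact_series}.

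The main care point is the remainder. It has to be written with the Hessian at $x^{(n)}$ (not at $x^*$) as the linear operator, so that the exact eigen-decomposition of $H_n$ applies. The factor $\tfrac12$ from the integral must also be kept to match the stated constant. Everything else is bookkeeping.
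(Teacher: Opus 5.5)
Your proof is correct and follows the paper's argument essentially verbatim. The paper makes the same split $x^{(n+1)}-x^*=(Q^{(n)}+B^{(n)})(x^{(n)}-x^*)$ with $Q^{(n)}=I-\beta_nA^{(n)}\nabla^2E(x^{(n)})$, uses the exact eigenbasis to place the spectrum of $A^{(n)}\nabla^2E(x^{(n)})$ in $[\mu,L]$, and bounds the remainder via $\|A^{(n)}\|_2\leqslant\mu/\epsilon$ and the factor $\tfrac12$ from the Lipschitz integral; the only cosmetic difference is that you bound the remainder as a vector $R_n$ rather than as the matrix $B^{(n)}$.
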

\begin{proof}
Denote $A^{(n)} = I -  \sum_{i=1}^{k} (1+\zeta_i^{(n)}) u_i^{(n)} u_i^{(n)\top}$ and $\|A^{(n)}\|_2\leqslant \mu/\epsilon$.
Since $\nabla E(x^*) =\mathbf{0}$, we directly expand the error term as follows:
\begin{equation}
\begin{aligned}
x^{(n+1) }-x^*
&= x^{(n)}-x^* - \beta_n A^{(n)}\left(\nabla E(x^{(n)}) -\nabla E(x^*) \right)\\
&= \left(I-\beta_n A^{(n)}\int_{0}^{1}\nabla^2 E\left(x^*+t(x^{(n)}-x^*) \right) \mathrm{d}t\right)(x^{(n)}-x^*)\\
&= \left(Q^{(n)}+B^{(n)}\right)(x^{(n)}-x^*),
\end{aligned}
\end{equation}
where
\begin{equation}\label{eq:qnbn}
\begin{aligned}
Q^{(n)}&=I-\beta_n A^{(n)}\nabla^2E(x^{(n)}) ,\\
B^{(n)}&=\beta_n A^{(n)} \! \left(\nabla^2E(x^{(n)}) -\int_{0}^{1}\nabla^2E\left(x^*+t(x^{(n)}-x^*) \right) \mathrm{d}t\right).
\end{aligned}
\end{equation}
Taking norms on both sides yields the inequality $r_{n+1} \leqslant \|Q^{(n)}\|_2 r_n + \|B^{(n)}\|_2 r_n$. 

Since $x^{(n)}\in U(x^*,\delta)$, Assumption~\ref{ass:1} implies the following bound for $B^{(n)}$:
\begin{equation}
\begin{aligned}
\bigl\|B^{(n)}\bigr\|_2
&\leqslant
\beta_n \bigl\|A^{(n)}\bigr\|_2
\int_{0}^{1}\bigl\|\nabla^2 E(x^{(n)}) -
\nabla^2 E(x^*+t(x^{(n)}-x^*)) \bigr\|_2 \mathrm{d}t\\
&\leqslant 
\frac{1}{2} \beta_n M \bigl\|A^{(n)}\bigr\|_2 \bigl\|x^{(n)}-x^*\bigr\|_2
= \frac{\mu M}{(L+ \mu)\epsilon}r_n.
\end{aligned}
\end{equation}
Using the spectral decomposition $\nabla^{2} E\bigl(x^{(n)}\bigr)  = \sum_{i=1}^{d} \lambda_i^{(n)}u_i^{(n)} u_i^{(n)\top}$, we obtain
\begin{equation}
M^{(n)}=A^{(n)}\nabla^2E(x^{(n)})=
\sum_{i=1}^{k} \mu u_i^{(n)} u_i^{(n)\top} 
+\sum_{j=k+1}^{d} \lambda_j^{(n)} u_j^{(n)} u_j^{(n)\top}, 
\end{equation}
where $\lambda_j^{(n)} \in [\mu, L]$ for $j>k$.
Consequently, the eigenvalues of $M^{(n)}$ lie in the interval $[\mu, L]$, and the eigenvalues of $Q^{(n)} = I - \beta_n M^{(n)}$ lie in the interval $[1 - \beta_n L, 1 - \beta_n \mu]$.
With $\beta_n = \frac{2}{L+\mu}$, we obtain $\|Q^{(n)}\|_2 \leqslant 1 - \frac{2\mu}{L+\mu}$, which completes the proof. 
\end{proof}

Based on \Cref{thm:exact-eig}, we obtain the following conclusion regarding the linear convergence rate of SCS-HiSD in the case of exact eigenvectors. 
The linear convergence rate $1-\frac{2}{\kappa+3}$, where $\kappa=L/\mu$, is independent of $\epsilon$, the lower bound on the magnitudes of the negative eigenvalues. 

\begin{theorem}\label{thm:exact_conclusion}
Under Assumption~\ref{ass:1}, if the initial guess $x^{(0)}$ satisfies
\begin{equation*}
r_0 = \|x^{(0) } - x^*\|_2 < \min\{\delta, \hat r\}, \qquad \hat r = {2\epsilon}/{M},
\end{equation*}
and the parameters are set as $\beta_n = \frac{2}{L+\mu}$, $\eta_n=\mu$ for all $n\geqslant 0$, then the sequence $x^{(n)}$ defined by \eqref{eq:exact_iter} converges to $x^*$ with a linear convergence rate:
\begin{equation}
r_n = \|x^{(n)} - x^*\|_2 \leqslant \left(1 - \frac{2}{\kappa + 3}\right) ^n \frac{\hat r  r_0}{\hat r - r_0}, \qquad \kappa = \frac{L}{\mu}.
\end{equation}
\end{theorem}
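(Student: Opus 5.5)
We would derive this by combining the one-step estimate of \Cref{thm:exact-eig} with \Cref{lem:series}. With $\beta_n = 2/(L+\mu)$ and $\eta_n=\mu$, \Cref{thm:exact-eig} gives the recurrence $r_{n+1}\leqslant (1-q)r_n + c r_n^2$ with
\begin{equation*}
q = \frac{2\mu}{L+\mu} = \frac{2}{\kappa+1}\in(0,1), \qquad c = \frac{\mu M}{(L+\mu)\epsilon}.
\end{equation*}
The threshold in \Cref{lem:series} is then $q/c = \frac{2\mu}{L+\mu}\cdot\frac{(L+\mu)\epsilon}{\mu M} = 2\epsilon/M = \hat r$. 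This explains the choice of $\hat r$ in the statement.

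The main step is an induction guaranteeing that the recurrence may be applied at every step. \Cref{thm:exact-eig} requires $r_n<\delta$, and \Cref{lem:series} requires the recurrence for all $n$. We argue inductively. Suppose $r_n < \min\{\delta,\hat r\}$ and $r_n\leqslant r_0$. Then $x^{(n)}\in U(x^*,\delta)$, so \Cref{thm:exact-eig} yields the inequality at step $n$. Part (a) of \Cref{lem:series}, applied only to that step, gives $r_{n+1}<r_n<\hat r$, and hence also $r_{n+1}<\delta$. The base case is the assumption $r_0<\min\{\delta,\hat r\}$. Thus the recurrence holds for all $n\geqslant 0$, and the whole sequence stays in $U(x^*,\delta)$.

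Part (b) of \Cref{lem:series} then gives
\begin{equation*}
r_n \leqslant \Big(\frac{1}{1+q}\Big)^n \frac{q r_0}{q-c r_0} = \Big(\frac{1}{1+q}\Big)^n \frac{\hat r r_0}{\hat r - r_0},
\end{equation*}
where the second equality follows by dividing numerator and denominator by $c$. Finally, $\frac{1}{1+q} = \frac{\kappa+1}{\kappa+3} = 1-\frac{2}{\kappa+3}$, which is the claimed rate.

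The only point needing care is the invariance argument: the hypotheses of \Cref{thm:exact-eig} must be verified at each iterate, not just at $x^{(0)}$. Part (a) of \Cref{lem:series} handles this directly, so there is no genuine obstacle.
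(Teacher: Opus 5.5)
Your proposal is correct and takes the same approach as the paper: apply Theorem~\ref{thm:exact-eig} with $q = 2\mu/(L+\mu)$ and $c = \mu M/((L+\mu)\epsilon)$, note that $q/c = \hat r$, and invoke Lemma~\ref{lem:series}(b). Your explicit induction using Lemma~\ref{lem:series}(a) to keep every iterate inside $U(x^*,\delta)$ is correct, and it fills in a step the paper leaves implicit.
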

\begin{proof}
From $r_0 = \|x^{(0)} - x^*\|_2 < \min\{\delta, \hat r\}$, we apply Theorem~\ref{thm:exact-eig} together with Lemma~\ref{lem:series}(b) by setting $q = \frac{2\mu}{L+\mu} \in (0,1)$ and $c = \frac{\mu M}{(L+\mu)\epsilon} > 0$ to obtain the result.
\end{proof}

\subsection{Cases with inexact eigenvectors}
In this subsection, we address the case of inexact eigenvectors, which covers most scenarios in practical computations.
We assume that the exact orthonormal eigenvectors of $\nabla^2 E(x^{(n)})$ are $\{u_i^{(n)}\}_{i=1}^d$, corresponding to the eigenvalues $\{\lambda_i^{(n)}\}_{i=1}^d$ sorted in ascending order, while the orthonormal directions $\{v_i^{(n)}\}_{i=1}^d$
are applied as ascent directions in the iteration scheme of SCS-HiSD:
\begin{equation}\label{eq:inexact_iter}
\begin{aligned}
x^{(n+1) } 
&= x^{(n)} - \beta_n \bigg( I - \sum_{i=1}^{k} v_i^{(n)} v_i^{(n)\top} - \sum_{i=1}^{k}\frac{\eta_n}{\left|\alpha_i^{(n)}\right|} v_i^{(n)} v_i^{(n)\top} \bigg) \nabla E(x^{(n)}) \\
&
= x^{(n)} - \beta_n \left(I - V_k^{(n)} (I+Y_k^{(n)}) V_k^{(n)\top} \right) \nabla E\big(x^{(n)}\big),
\end{aligned}
\end{equation}
where $Y_k^{(n)} = \mathrm{diag} \left( \zeta_1^{(n)}, \dots, \zeta_k^{(n)} \right) \in \mathbb{R}^{k\times k}$, $\zeta_i^{(n)}=\eta_n / |\alpha_i^{(n)}|$. 
Here, we recall that $\alpha_i^{(n)} = v_i^{(n)\top} \nabla^2 E(x^{(n)})v_i^{(n)}$.
We introduce an assumption to control the discrepancy between the exact eigenvectors $u_i^{(n)}$ and the orthonormal directions $v_i^{(n)}$ in the SCS-HiSD iteration scheme \eqref{eq:inexact_iter}.
\begin{assumption}\label{ass:2}
Assume that an error bound $\alpha\in(0,1)$ holds for all $n\geqslant 0$:
\begin{equation}\label{eq:diff_v}
\left\lVert u_i^{(n)} - v_i^{(n)} \right\rVert_2 \leqslant 
\frac{\alpha}{\sqrt{k}}, \quad i=1,\dots,k.
\end{equation}
\end{assumption}
Under \Cref{ass:2}, we can estimate the deviation between the subspace spanned by the exact eigenvectors $U_k^{(n)} = \left[u_1^{(n)}, \dots, u_k^{(n)}\right]$ and that spanned by their orthonormal numerical approximations $V_k^{(n)} = \left[v_1^{(n)}, \dots, v_k^{(n)}\right]$ by introducing the following lemma with $U_{-k}^{(n)} = \left[u_{k+1}^{(n)}, \dots, u_d^{(n)}\right]$.
\begin{lemma}\label{lem:represent}
Under \Cref{ass:2}, for the column-orthogonal matrix $V_k^{(n)}$ with the following representation:
\begin{equation}\label{eq:decomp}
V_k^{(n)}
= U^{(n)}C^{(n)}
= \left[ U_k^{(n)}, U_{-k}^{(n)} \right]
\begin{bmatrix}
C_k^{(n)}\\[2pt]
C_{-k}^{(n)}
\end{bmatrix}
=U_k^{(n)}C_k^{(n)}+U_{-k}^{(n)}C_{-k}^{(n)},
\end{equation}
where $C_k^{(n)}= U_k^{(n)\top} V_k^{(n)}\in\mathbb{R}^{k\times k}$ and $C_{-k}^{(n)}= U_{-k}^{(n)\top} V_k^{(n)}\in\mathbb{R}^{(d-k)\times k}$, we have
\begin{equation}
\lVert C_{-k}^{(n)} \rVert_2 \leqslant \alpha,
\qquad 
(1-\alpha) I  \preceq C_k^{(n)} C_k^{(n)\top}  \preceq I.
\end{equation}
\end{lemma}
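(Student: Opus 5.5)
Since $U^{(n)}=[U_k^{(n)},U_{-k}^{(n)}]$ is orthogonal, we have $C^{(n)}=U^{(n)\top}V_k^{(n)}$. Because $V_k^{(n)}$ has orthonormal columns, $C^{(n)\top}C^{(n)} = V_k^{(n)\top}V_k^{(n)} = I_k$, i.e.
\begin{equation*}
C_k^{(n)\top}C_k^{(n)} + C_{-k}^{(n)\top}C_{-k}^{(n)} = I_k .
\end{equation*}
The plan is to bound $\|C_{-k}^{(n)}\|_2$ directly from \Cref{ass:2} and then read off the two-sided bound on $C_k^{(n)}C_k^{(n)\top}$ from this identity.

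For the first bound, note that $U_{-k}^{(n)\top}u_i^{(n)}=\mathbf{0}$ for $i\leqslant k$. Hence
\begin{equation*}
C_{-k}^{(n)} = U_{-k}^{(n)\top}V_k^{(n)} = U_{-k}^{(n)\top}\bigl(V_k^{(n)}-U_k^{(n)}\bigr).
\end{equation*}
Since $\|U_{-k}^{(n)\top}\|_2\leqslant 1$, we get $\|C_{-k}^{(n)}\|_2\leqslant \|V_k^{(n)}-U_k^{(n)}\|_2$. We bound the spectral norm by the Frobenius norm and use \eqref{eq:diff_v}:
\begin{equation*}
\|V_k^{(n)}-U_k^{(n)}\|_2 \leqslant \|V_k^{(n)}-U_k^{(n)}\|_F = \Bigl(\sum_{i=1}^k \|u_i^{(n)}-v_i^{(n)}\|_2^2\Bigr)^{1/2} \leqslant \Bigl(k\cdot\frac{\alpha^2}{k}\Bigr)^{1/2}=\alpha.
\end{equation*}
This gives $\|C_{-k}^{(n)}\|_2\leqslant\alpha$, and it is the only place where the $1/\sqrt{k}$ scaling in the assumption is used.

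For the second bound, the identity above gives $C_k^{(n)\top}C_k^{(n)} = I - C_{-k}^{(n)\top}C_{-k}^{(n)}$. The subtracted term is positive semidefinite, so $C_k^{(n)\top}C_k^{(n)}\preceq I$. Its largest eigenvalue is $\|C_{-k}^{(n)}\|_2^2\leqslant\alpha^2\leqslant\alpha$, because $\alpha\in(0,1)$. Therefore $C_k^{(n)\top}C_k^{(n)}\succeq(1-\alpha^2)I\succeq(1-\alpha)I$.

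The statement concerns $C_k^{(n)}C_k^{(n)\top}$ rather than $C_k^{(n)\top}C_k^{(n)}$. Both are $k\times k$ matrices and $C_k^{(n)}$ is square, so they have the same eigenvalues (the squared singular values of $C_k^{(n)}$). Both are also symmetric, so the Loewner bounds $(1-\alpha)I\preceq\cdot\preceq I$ carry over unchanged.

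The only point needing care is this final transfer between $C_k^{(n)\top}C_k^{(n)}$ and $C_k^{(n)}C_k^{(n)\top}$, which is handled by squareness. The rest is a direct computation, and the argument even yields the sharper lower bound $1-\alpha^2$. \Cref{lem:proj-gap} is not needed here, although it identifies $\|C_{-k}^{(n)}\|_2$ with the projection gap $\|U_k^{(n)}U_k^{(n)\top}-V_k^{(n)}V_k^{(n)\top}\|_2$, which is presumably how the lemma is used later.
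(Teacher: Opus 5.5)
Your proof is correct, and it takes a more direct route than the paper's.

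\textbf{The paper's route.} It argues through the projection distance. It shows $\|U_k^{(n)}U_k^{(n)\top}-V_k^{(n)}V_k^{(n)\top}\|_2=\|(I-U_k^{(n)}U_k^{(n)\top})V_k^{(n)}\|_2\leqslant\|V_k^{(n)}-U_k^{(n)}\|_F\leqslant\alpha$. It then invokes \Cref{lem:proj-gap} to identify this gap with $\|C_{-k}^{(n)}\|_2$. The lower bound comes from compressing the Loewner inequality $U_k^{(n)}U_k^{(n)\top}-\alpha I\preceq V_k^{(n)}V_k^{(n)\top}$ with $U_k^{(n)}$. The upper bound comes from $\|C_k^{(n)}\|_2\leqslant 1$.

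\textbf{Your route.} You replace the first step by the observation $U_{-k}^{(n)\top}U_k^{(n)}=O$, so no CS-type lemma is needed. You then get both Loewner bounds at once from the identity $C_k^{(n)\top}C_k^{(n)}+C_{-k}^{(n)\top}C_{-k}^{(n)}=I$. The transfer to $C_k^{(n)}C_k^{(n)\top}$ is correctly justified by squareness. The Frobenius-norm estimate that uses the $1/\sqrt{k}$ scaling is common to both proofs.

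\textbf{Comparison.} Your argument is shorter and self-contained, and it gives the sharper lower bound $(1-\alpha^2)I$. The paper's compression step loses a square. The eigenvalues of $C_k^{(n)}C_k^{(n)\top}$ are squared cosines of principal angles, so they are at least $1-\alpha^2$ whenever the projection gap is at most $\alpha$. This sharper bound could be used to tighten the splitting constants in \Cref{thm:approx-eig}. What the paper's route buys is mainly conceptual: it identifies $\|C_{-k}^{(n)}\|_2$ with the standard distance between the two subspaces. That is not needed for the statement itself.
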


\begin{proof}
We adopt the following distance based on orthogonal projection:
\begin{equation}
d\left(\mathrm{span}\{U_k^{(n)}\}, \mathrm{span}\{V_k^{(n)}\}\right)
:= \left\lVert U_k^{(n)} U_k^{(n)\top} - V_k^{(n)} V_k^{(n)\top} \right\rVert_2.
\end{equation}
To estimate this distance, we note that the term $\bigl\lVert Z_1^{\top} W_2 \bigr\rVert_2$ in Lemma~\ref{lem:proj-gap} is equivalent to the norm of the projection of $Z_1$ onto the orthogonal complement of $W_1$. 
Specifically, we have
\begin{equation*}
\left\lVert W_2^\top Z_1 \right\rVert_2^2 
= \left\lVert Z_1^\top (W_2 W_2^\top) Z_1 \right\rVert_2 
= \left\lVert Z_1^\top (I - W_1 W_1^\top) Z_1 \right\rVert_2 
= \left\lVert (I - W_1 W_1^\top) Z_1 \right\rVert_2^2.
\end{equation*}
Substituting column-orthogonal matrices $U_k^{(n)}$ and $V_k^{(n)}$ yields:
\begin{equation}\label{eq:subspace_equiv}
\left\lVert U_k^{(n)} U_k^{(n)\top} - V_k^{(n)} V_k^{(n)\top} \right\rVert_2 = 
\left\lVert \left(I - U_k^{(n)} U_k^{(n)\top}\right) V_k^{(n)} \right\rVert_2.
\end{equation}
Utilizing the property $\left(I - U_k^{(n)} U_k^{(n)\top}\right) U_k^{(n)} = O$, we can estimate the right-hand side of \eqref{eq:subspace_equiv} by introducing a difference term:
\begin{equation}
\begin{aligned}
&\left\lVert (I - U_k^{(n)} U_k^{(n)\top}) V_k^{(n)} \right\rVert_2
= \left\lVert (I - U_k^{(n)} U_k^{(n)\top}) (V_k^{(n)} - U_k^{(n)}) \right\rVert_2 \\
\leqslant \;
&\left\lVert I - U_k^{(n)} U_k^{(n)\top} \right\rVert_2
\left\lVert V_k^{(n)} - U_k^{(n)} \right\rVert_2 
\leqslant \left\lVert V_k^{(n)} - U_k^{(n)} \right\rVert_2
\leqslant \left\lVert V_k^{(n)} - U_k^{(n)} \right\rVert_F.
\end{aligned}
\end{equation}
Finally, based on the vector error bound \eqref{eq:diff_v} in \Cref{ass:2}, we obtain:
\begin{equation}
\left\lVert V_k^{(n)} - U_k^{(n)} \right\rVert_F^2 = 
\sum_{i=1}^k \left\lVert v_i^{(n)} - u_i^{(n)} \right\rVert_2^2
\leqslant 
k \cdot \frac{\alpha^2}{k} = \alpha^2,
\end{equation}
which indicates that the subspace distance satisfies
\begin{equation}\label{eq:dis_subspace}
\left\lVert U_k^{(n)} U_k^{(n)\top} - V_k^{(n)} V_k^{(n)\top} \right\rVert_2 \leqslant \alpha.
\end{equation}
Combining \eqref{eq:dis_subspace} with Lemma \ref{lem:proj-gap}, we obtain
\begin{equation}\label{eq:vector_alpha}
\left\lVert C_{-k}^{(n)} \right\rVert_2 = 
\left\lVert U_{-k}^{(n)\top} V_k^{(n)} \right\rVert_2 = \left\lVert U_k^{(n)} U_k^{(n)\top} - V_k^{(n)} V_k^{(n)\top} \right\rVert_2
\leqslant \alpha.
\end{equation}

Based on \eqref{eq:dis_subspace}, we have
$U_k^{(n)} U_k^{(n)\top} - \alpha I \preceq V_k^{(n)} V_k^{(n)\top}$,
and consequently,
\begin{equation*}
(1-\alpha) I = U_k^{(n)\top}\left(U_k^{(n)} U_k^{(n)\top}-\alpha I\right) U_k^{(n)}
\preceq
C_k^{(n)} C_k^{(n)\top}.
\end{equation*} 
Using the column-orthogonality property $\|V_k^{(n)}\|_2=\|U_k^{(n)}\|_2=1$, we obtain
\begin{equation}
\left\|C_k^{(n)} C_k^{(n)\top}\right\|_2
\leqslant \left\|C_k^{(n)}\right\|_2^2
\leqslant \left\|U_k^{(n)}\right\|_2^2  \left\|V_k^{(n)}\right\|_2^2
= 1,
\end{equation}
so we conclude that $C_k^{(n)} C_k^{(n)\top} \preceq I$, which completes the proof.
\end{proof}

According to Lemma \ref{lem:represent}, we can linearly express $v_i^{(n)}$, the $i$-th column of $V_k^{(n)}$, using basis $\{u_j^{(n)}\}_{j=1}^d$ as $v_i^{(n)} = \sum_{j=1}^{d}c_{ji}^{(n)}u_j^{(n)}$.
Based on \eqref{eq:diff_v}, we have
\begin{equation}\label{eq:neq}
\biggl\lVert (1-c_{ii}^{(n)})u_i^{(n)} + \sum_{j\neq i}c_{ji}^{(n)}u_j^{(n)} \biggr\rVert_2 \leqslant \frac{\alpha}{\sqrt{k}}.
\end{equation}
Given the orthonormality of 
$\{u_j^{(n)}\}_{j=1}^d$ and combined with the inequality \eqref{eq:neq}, we deduce
$\sum\limits_{j\neq i}\left(c_{ji}^{(n)}\right)^2 \leqslant \frac{\alpha^2}{k}$ and 
$\left(c_{ii}^{(n)}\right)^2 \geqslant 1-\frac{\alpha^2}{k}$.
For the one-step iteration \eqref{eq:inexact_iter} with inexact eigenvectors, we have the following theorem. 

\begin{theorem}\label{thm:approx-eig}
Under Assumptions~\ref{ass:1} and \ref{ass:2} where $\alpha$ satisfies
\begin{equation}\label{eq:alpha1}
\alpha\leqslant\sqrt{\frac{k\epsilon}{2(L+\epsilon)}},\quad
\alpha + \kappa(\alpha+ 2p(\alpha)) < 2,  
\end{equation}
if for some $n \geqslant 0$, $r_n = \|x^{(n)} - x^*\|_2 < \delta$ holds for \eqref{eq:inexact_iter} with  $\beta_n=\dfrac{4}{b(\alpha)}$ and $\eta_n=\dfrac{3}{2}\mu$,
then the following estimate holds:
\begin{equation}\label{eq:inexact_a}
r_{n+1} = \|x^{(n+1)} - x^*\|_2 \leqslant (1 - q(\alpha) ) r_n + c(\alpha) r_n^2.
\end{equation}
Here $p(\alpha)>0$, $b(\alpha)>0$, $q(\alpha)\in(0,1)$, $c(\alpha)>0$ are defined as follows,
\begin{equation}\label{eq:pbqc}
\begin{aligned}
p(\alpha)&=\left(\frac{5}{2}+\frac{27\mu}{2\epsilon}\right) \alpha+\left(\frac{3\mu}{\epsilon}+\frac{1}{2}\right) \alpha^2,
\quad 
b(\alpha) = (2-\alpha-\alpha^2)L+3(2-\alpha)\mu, \\
q(\alpha) &= \frac{(4 - 2\alpha)\mu - (2\alpha+4p(\alpha))L}{b(\alpha)}, 
\quad c(\alpha) = \frac{6M\mu}{b(\alpha)\epsilon}, 
\quad \kappa = \frac{L}{\mu}.
\end{aligned}
\end{equation}
\end{theorem}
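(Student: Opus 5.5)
We follow the proof of Theorem~\ref{thm:exact-eig}. Set $A^{(n)} = I - V_k^{(n)}(I+Y_k^{(n)})V_k^{(n)\top}$ and write
\[
x^{(n+1)}-x^* = (Q^{(n)}+B^{(n)})(x^{(n)}-x^*),
\]
with $Q^{(n)}=I-\beta_n A^{(n)}\nabla^2E(x^{(n)})$ and $B^{(n)}$ defined as in \eqref{eq:qnbn}.

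\emph{The quadratic term.} Since $V_k^{(n)}$ has orthonormal columns, $\|A^{(n)}\|_2\leqslant \max_i \zeta_i^{(n)}=\eta_n/\min_i|\alpha_i^{(n)}|$. The first condition in \eqref{eq:alpha1} is meant to give $|\alpha_i^{(n)}|\geqslant \epsilon/2$, or at least some explicit fraction of $\epsilon$. To see this, expand
\[
\alpha_i^{(n)}=\sum_j (c_{ji}^{(n)})^2\lambda_j^{(n)}\leqslant (c_{ii}^{(n)})^2\lambda_i^{(n)}+L\sum_{j\ne i}(c_{ji}^{(n)})^2 \leqslant -\epsilon\Bigl(1-\tfrac{\alpha^2}{k}\Bigr)+L\tfrac{\alpha^2}{k}.
\]
With $\alpha^2\leqslant k\epsilon/(2(L+\epsilon))$ this gives $\alpha_i^{(n)}\leqslant -\epsilon/2$, so $|\alpha_i^{(n)}|\geqslant\epsilon/2$. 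Then $\|A^{(n)}\|_2\leqslant 2\eta_n/\epsilon=3\mu/\epsilon$ (using $3\mu/\epsilon\geqslant1$). The Lipschitz bound yields $\|B^{(n)}\|_2\leqslant \tfrac12\beta_n M\|A^{(n)}\|_2 r_n = \tfrac{4}{b}\cdot\tfrac{3M\mu}{2\epsilon}r_n = c(\alpha)r_n$, which matches $c(\alpha)$.

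\emph{The linear term.} The core of the argument is bounding $\|Q^{(n)}\|_2\leqslant 1-q(\alpha)$. Write $H=\nabla^2E(x^{(n)})$ and split
\[
A^{(n)}H = A_U H + (A^{(n)}-A_U)H,
\]
where $A_U$ is the exact-eigenvector operator $I-\sum_i(1+\eta_n/|\lambda_i|)u_iu_i^\top$ from Theorem~\ref{thm:exact-eig}. The product $A_U H$ is symmetric, with spectrum $\{\eta_n\}\cup\{\lambda_j\}_{j>k}\subset[\mu,L]$ because $\eta_n=\tfrac32\mu\leqslant L$. The perturbation splits further into two pieces:
\begin{itemize}
\item a projector change $(U_kU_k^\top-V_kV_k^\top)H$, of size at most $\alpha L$ by \eqref{eq:dis_subspace};
\item a scaling change $U_kZ_UU_k^\top H - V_kY_kV_k^\top H$, where $Z_U=\mathrm{diag}(\eta_n/|\lambda_i|)$.
\end{itemize}
For the scaling change, use the decomposition \eqref{eq:decomp} and the bounds of Lemma~\ref{lem:represent}. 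The resulting terms are either of order $\alpha$ times $\eta_n/\epsilon$, or come from $|\zeta_i-\eta_n/|\lambda_i||$. The latter is controlled through $|\alpha_i-\lambda_i|\leqslant (L+\epsilon)\alpha^2/k$ and the lower bounds $|\alpha_i|,|\lambda_i|\gtrsim\epsilon$.

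We expect these terms to assemble into a relative perturbation bound of the form $\|A^{(n)}H - A_UH\|_2\leqslant (\alpha+2p(\alpha))L$ or similar, with $p(\alpha)$ collecting the $\mu/\epsilon$ factors; this matches the $\tfrac{27\mu}{2\epsilon}$ and $\tfrac{3\mu}{\epsilon}$ coefficients. Because $A^{(n)}H$ is not symmetric, the bound on $\|I-\beta_nA^{(n)}H\|_2$ must be obtained from $\|I-\beta_nA_UH\|_2+\beta_n\|\text{perturbation}\|_2$. The eigenvalue window must also account for the $\alpha$-dependent shift: the effective spectrum lies in roughly $[(1-\alpha/2)\mu\cdot\text{const},\ (1-\ldots)L+\ldots]$, which is why $b(\alpha)$ appears as the sum of the endpoints with $\beta_n=4/b(\alpha)$.

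\emph{Main obstacle and conclusion.} The main obstacle is the careful bookkeeping that yields exactly $q(\alpha)=\bigl((4-2\alpha)\mu-(2\alpha+4p(\alpha))L\bigr)/b(\alpha)$. I would first compute the symmetric part's extreme eigenvalues via Weyl's inequality, then add the nonsymmetric remainder in norm. Finally, $q(\alpha)\in(0,1)$ follows from the second condition in \eqref{eq:alpha1}: dividing by $\mu$, positivity of $q$ is equivalent to $2-\alpha>\kappa(\alpha+2p(\alpha))$, which is that condition. Combining $\|Q^{(n)}\|_2\leqslant 1-q(\alpha)$ with the bound on $\|B^{(n)}\|_2$ gives \eqref{eq:inexact_a}.
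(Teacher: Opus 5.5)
\textbf{The linear term is not proved.} Your bound on $B^{(n)}$ is correct and matches the paper: $\alpha_i^{(n)}\leqslant-\epsilon/2$ gives $\|A^{(n)}\|_2\leqslant 3\mu/\epsilon$, which yields exactly $c(\alpha)$. Your remark that $q(\alpha)>0$ is equivalent to the second condition in \eqref{eq:alpha1} is also correct. The gap is the linear term. The estimate $\|Q^{(n)}\|_2\leqslant 1-q(\alpha)$ is the whole quantitative content of the theorem, and you only state what you expect it to be.

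The form you anticipate would not give the stated rate. In your splitting with $\beta_n=4/b(\alpha)$, the best available bound is $\|I-\beta_nA_UH\|_2\leqslant 1-4\mu/b(\alpha)$. A perturbation bound of $(\alpha+2p(\alpha))L$ then yields the factor $1-\bigl(4\mu-(4\alpha+8p(\alpha))L\bigr)/b(\alpha)$. Its positivity needs $\kappa(\alpha+2p(\alpha))<1$, which is strictly stronger than \eqref{eq:alpha1}.

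To reach exactly $1-q(\alpha)$ by your route you must prove the sharper bound
\[
\|(A^{(n)}-A_U)H\|_2\leqslant \tfrac{\alpha}{2}(L+\mu)+p(\alpha)L .
\]
You must also check that the step $4/b(\alpha)$ still gives $\|I-\beta_nA_UH\|_2=1-\beta_n\mu$. This step is not matched to the spectrum $\{\tfrac32\mu\}\cup\{\lambda_j^{(n)}\}_{j>k}$ of $A_UH$. Note also that $\tfrac32\mu\leqslant L$ is not part of Assumption~\ref{ass:1}. The check does go through, because \eqref{eq:alpha1} forces $27L\alpha<2\epsilon$, so $\kappa\alpha<2/27$.

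\textbf{The role of $b(\alpha)$ and how to finish.} Your explanation of $b(\alpha)$ as the sum of the endpoints of an $\alpha$-shifted window describes the paper's mechanism, not yours: your reference operator $A_UH$ has no $\alpha$-dependence. The paper puts three pieces into its symmetric part $K^{(n)}$: $\tfrac{\alpha}{2}\Lambda_k^{(n)}$, $(1-\tfrac{\alpha^2}{2})\Lambda_{-k}^{(n)}$, and the \emph{inexact} scaling $Y_k^{(n)}$ (built from $\alpha_i^{(n)}$, not $\lambda_i^{(n)}$). It then uses $\tfrac12|\lambda_i^{(n)}|\leqslant|\alpha_i^{(n)}|\leqslant\tfrac32|\lambda_i^{(n)}|$ to place $\tfrac{3\mu}{2}|\lambda_i^{(n)}|/|\alpha_i^{(n)}|$ in $[\mu,3\mu]$. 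This is why $\eta_n=\tfrac32\mu$ and why $3(2-\alpha)\mu$ appears in $b(\alpha)$. The remainders $R_0^{(n)},R^{(n)}$, including the commutator $C_k^{(n)}Y_k^{(n)}-Y_k^{(n)}C_k^{(n)}$, are then bounded by $p(\alpha)L$.

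Your route can be completed:
\begin{itemize}
\item Write $C_k^{(n)}=I+D_k^{(n)}$ and bound the four $U$-blocks of the scaling change separately.
\item Use $\|Y_k^{(n)}\Lambda_k^{(n)}\|_2\leqslant3\mu$.
\item Use $\eta_n|\alpha_i^{(n)}-\lambda_i^{(n)}|/|\alpha_i^{(n)}|\leqslant 6\mu L\alpha^2/(k\epsilon)$.
\item Since $\alpha<2\epsilon/(27L)$, the block sums together with the projector term $\alpha L$ fit within the budget displayed above.
\end{itemize}
That computation is the proof, however, and it is absent.

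Your bound $|\alpha_i^{(n)}-\lambda_i^{(n)}|\leqslant(L+\epsilon)\alpha^2/k$ is also wrong. From $\alpha_i^{(n)}-\lambda_i^{(n)}=\sum_{j\neq i}(c_{ji}^{(n)})^2(\lambda_j^{(n)}-\lambda_i^{(n)})$ you only get $(L+|\lambda_i^{(n)}|)\alpha^2/k$. This weaker bound is still sufficient for the argument.
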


\begin{proof}
From \eqref{eq:inexact_iter}, we denote $A^{(n)} = I - V_k^{(n)} (I+Y_k^{(n)}) V_k^{(n)\top}$, and then $\lVert A^{(n)} \rVert_2\leqslant \max\{1,\lVert Y_k^{(n)} \rVert_2\}$.
First, we establish an upper bound for $\lVert Y_k^{(n)} \rVert_2 = \max\{|\zeta_1^{(n)}|,$ \dots, $|\zeta_k^{(n)}|\}$.
For $\alpha\leqslant \sqrt{\frac{k\epsilon}{2(L+\epsilon)}}$, we have
\begin{equation}\label{eq:bound_alpha}
\begin{aligned}
\alpha_i^{(n)} &= \sum_{j=1}^d \left(c_{ji}^{(n)}\right)^2\lambda_j^{(n)} \leqslant
\left(c_{ii}^{(n)}\right)^2\lambda_i^{(n)}+\sum_{j=k+1}^d \left(c_{ji}^{(n)}\right)^2\lambda_j^{(n)} \\
&\leqslant
-\Bigl(1-\frac{\alpha^2}{k}\Bigr)\epsilon+\frac{\alpha^2}{k}L = -\epsilon+\frac{\alpha^2}{k}
(L+\epsilon)\leqslant
-\frac{\epsilon}{2},\quad i=1,\dots,k.
\end{aligned}
\end{equation}
Therefore, $\lVert Y_k^{(n)} \rVert_2 \leqslant \frac{3\mu}{\epsilon}$ and $\lVert A^{(n)} \rVert_2 \leqslant \frac{3\mu}{\epsilon}$.
Following a similar procedure as in Theorem~\ref{thm:exact-eig}, we define $Q^{(n)}$ and $B^{(n)}$ as in \eqref{eq:qnbn} and obtain
\begin{equation}
x^{(n+1) }-x^* = \left(Q^{(n)}+B^{(n)}\right) (x^{(n)}-x^*).
\end{equation}
From 
$\lVert B^{(n)} \rVert_2 
\leqslant \tfrac12 \beta_n M \lVert A^{(n)} \rVert_2  r_n
\leqslant \tfrac{3M\mu}{2\epsilon} \beta_n r_n$, we obtain the following inequality
\begin{equation}\label{eq:inexact_b}
r_{n+1}\leqslant
\lVert Q^{(n)} \rVert_2 r_n + \lVert B^{(n)} \rVert_2 r_n
\leqslant
\lVert Q^{(n)} \rVert_2 r_n +  \tfrac{3M\mu}{2\epsilon} \beta_n r_n^2.
\end{equation}

To estimate the upper bound of $\lVert Q^{(n)} \rVert_2$, we utilize the spectral decomposition of the Hessian matrix in the following form:
\begin{equation}
\nabla^2 E(x^{(n)}) = U_k^{(n)}\Lambda_k^{(n)}U_k^{(n)\top} + U_{-k}^{(n)}\Lambda_{-k}^{(n)}U_{-k}^{(n)\top},
\end{equation}
where $\Lambda_k^{(n)}=\mathrm{diag}(\lambda_1^{(n)},\dots,\lambda_k^{(n)})$ and $\Lambda_{-k}^{(n)}=\mathrm{diag}(\lambda_{k+1}^{(n)},\dots,\lambda_d^{(n)})$.
With \eqref{eq:decomp}, we deduce that:
\begin{equation*}
\begin{aligned}
A^{(n)} &= I - U_k^{(n)} C_k^{(n)} \bigl(I+Y_k^{(n)}\bigr) C_k^{(n)\top} U_k^{(n)\top} 
- U_{-k}^{(n)} C_{-k}^{(n)} \bigl(I+Y_k^{(n)}\bigr) C_{-k}^{(n)\top} U_{-k}^{(n)\top} \\
&\quad - U_{-k}^{(n)} C_{-k}^{(n)} \bigl(I+Y_k^{(n)}\bigr) C_k^{(n)\top} U_k^{(n)\top}
- U_k^{(n)} C_k^{(n)} \bigl(I+Y_k^{(n)}\bigr) C_{-k}^{(n)\top} U_{-k}^{(n)\top},
\end{aligned}
\end{equation*}
and rewrite $A^{(n)} \nabla^2 E(x^{(n)}) = K_0^{(n)} + R_0^{(n)}$, where
\begin{equation*}
\begin{aligned}
K_0^{(n)} &=  
U_k^{(n)} \bigl(I - C_k^{(n)} C_k^{(n)\top} \bigr) \Lambda_k^{(n)} U_k^{(n)\top} 
+ U_{-k}^{(n)} \bigl(I - C_{-k}^{(n)} C_{-k}^{(n)\top}\bigr)\Lambda_{-k}^{(n)}U_{-k}^{(n)\top}\\
&\quad - U_k^{(n)}  C_k^{(n)} Y_k^{(n)} C_k^{(n)\top}\Lambda_k^{(n)} U_k^{(n)\top},\\
R_0^{(n)} &= - U_{-k}^{(n)}C_{-k}^{(n)} Y_k^{(n)} C_{-k}^{(n)\top}\Lambda_{-k}^{(n)} U_{-k}^{(n)\top}
- U_k^{(n)} C_k^{(n)} \bigl(I + Y_k^{(n)}\bigr) C_{-k}^{(n)\top}\Lambda_{-k}^{(n)} U_{-k}^{(n)\top}\\
&\quad - U_{-k}^{(n)} C_{-k}^{(n)} \bigl(I + Y_k^{(n)}\bigr) C_k^{(n)\top}\Lambda_k^{(n)} U_k^{(n)\top}.
\end{aligned}
\end{equation*}
We adopt the following splitting strategy 
\begin{align*}
I - C_k^{(n)} C_k^{(n)\top}
&= \frac{\alpha}{2} I +\left(
\Bigl(1-\frac{\alpha}{2}\Bigr) I - 
C_k^{(n)} C_k^{(n)\top}
\right),\\
I - C_{-k}^{(n)} C_{-k}^{(n)\top}
&= \Bigl(1-\frac{\alpha^{2}}{2}\Bigr)I + \left(
\frac{\alpha^{2}}{2} I - 
C_{-k}^{(n)} C_{-k}^{(n)\top} 
\right),\\
 C_k^{(n)} Y_k^{(n)} 
&=  Y_k^{(n)} C_k^{(n)} +  \Bigl(C_k^{(n)} Y_k^{(n)}-Y_k^{(n)}C_k^{(n)}\Bigr) ,
\end{align*}
for further decomposing $K_0^{(n)} = K^{(n)}+R^{(n)}$:
\begin{equation*}
\begin{aligned}
K^{(n)} &=
U_k^{(n)}\left(\frac{\alpha}{2}I - \Bigl(1-\frac{\alpha}{2}\Bigr) Y_k^{(n)} \right) \Lambda_k^{(n)} U_k^{(n)\top} + \Bigl(1-\frac{\alpha^{2}}{2}\Bigr) U_{-k}^{(n)} \Lambda_{-k}^{(n)} U_{-k}^{(n)\top},\\
R^{(n)} &=U_k^{(n)}\bigl(I+Y_k^{(n)}\bigr)\left(\Bigl(1-\frac{\alpha}{2}\Bigr) I - C_k^{(n)} C_k^{(n)\top}\right) \Lambda_k^{(n)} U_k^{(n)\top}\\
&\quad+U_{-k}^{(n)}\left(\frac{\alpha^{2}}{2} I - C_{-k}^{(n)} C_{-k}^{(n)\top}\right) \Lambda_{-k}^{(n)} U_{-k}^{(n)\top}\\
&\quad-U_k^{(n)} \Bigl(C_k^{(n)} Y_k^{(n)}-Y_k^{(n)}C_k^{(n)}\Bigr) C_k^{(n)\top} \Lambda_k^{(n)} U_k^{(n)\top}.
\end{aligned}
\end{equation*}
This leads to an upper bound of $\lVert Q^{(n)} \rVert_2= \lVert I- \beta_n A^{(n)} \nabla^2 E(x^{(n)}) \rVert_2$:
\begin{equation}\label{eq:qnleq}
\lVert Q^{(n)} \rVert_2 \leqslant
\bigl\lVert I - \beta_n K^{(n)} \bigr\rVert_2 + \beta_n \left(\lVert R_0^{(n)} \rVert_2 +\lVert R^{(n)} \rVert_2\right).
\end{equation}

For $K^{(n)}$, it is straightforward that its eigenvalues are 
\begin{equation}
\begin{aligned}
z_i^{(n)} &:= \frac{\alpha}{2} \lambda_i^{(n)} -\left(1-\frac{\alpha}{2}\right) \frac{3\mu}{2|\alpha_i^{(n)}|}\lambda_i^{(n)}, &1\leqslant i\leqslant k,\\
z_i^{(n)} &:= \left(1-\frac{\alpha^2}{2}\right) \lambda_j^{(n)}, &k< i\leqslant d. 
\end{aligned}
\end{equation}
Similar to \eqref{eq:bound_alpha}, we can bound $\alpha_i^{(n)}$ for $1\leqslant i\leqslant k$ by
\begin{align*}
\alpha_i^{(n)} &= \left(c_{ii}^{(n)}\right)^2\lambda_i^{(n)} + \sum_{j\neq i} \left(c_{ji}^{(n)}\right)^2\lambda_j^{(n)} 
\geqslant \lambda_i^{(n)} - L\sum_{j\neq i} \left(c_{ji}^{(n)}\right)^2 
\geqslant \lambda_i^{(n)} - \frac{\alpha^2}{k}L \\
&\geqslant \lambda_i^{(n)} - \frac{L\epsilon}{2(L+\epsilon)} 
> \lambda_i^{(n)} - \frac{\epsilon}{2} 
\geqslant \lambda_i^{(n)} + \frac{1}{2}\lambda_i^{(n)}
= \frac{3}{2}\lambda_i^{(n)},\\
\alpha_i^{(n)} &= \left(c_{ii}^{(n)}\right)^2\lambda_i^{(n)} + \sum_{j\neq i} \left(c_{ji}^{(n)}\right)^2\lambda_j^{(n)} 
\leqslant  \Bigl(1-\frac{\alpha^2}{k}\Bigr)\lambda_i^{(n)} + \frac{\alpha^2}{k}L \\
&\leqslant \lambda_i^{(n)} + \frac{\epsilon}{2(L+\epsilon)}(L-\lambda_i^{(n)})
\leqslant\lambda_i^{(n)} - \frac{1}{2}\lambda_i^{(n)} = \frac{1}{2}\lambda_i^{(n)}.
\end{align*}
This indicates $\left(1-\frac{\alpha}{2}\right) \mu-\frac{\alpha}{2}L \leqslant z_i < 3\left(1-\frac{\alpha}{2}\right)\mu$. 
Then we have a uniform bound for all the eigenvalues of $K^{(n)}$:
\begin{equation}
\Bigl(1-\frac{\alpha}{2}\Bigr) \mu-\frac{\alpha}{2}L 
\leqslant z_i \leqslant
\Bigl(1-\frac{\alpha^2}{2}\Bigr) L + 2\left(1-\frac{\alpha}{2}\right)\mu,
\quad 1\leqslant i\leqslant d.
\end{equation}
Following the same procedure in Theorem~\ref{thm:exact-eig}, we obtain
\begin{equation}\label{eq:imbnk}
\bigl\lVert I-\beta_n K^{(n)} \bigr\rVert_2
\leqslant \frac{(2+\alpha-\alpha^2) L+(2-\alpha) \mu}{b(\alpha)},\quad
\beta_n=\frac{4}{b(\alpha)}.
\end{equation}

For $D_k^{(n)} = C_k^{(n)} - I$, by utilizing the orthonormality of $\{u_j\}$ and the bound established in \eqref{eq:neq}, we have
\begin{equation}
\lVert D_k^{(n)} \rVert_2^2 \leqslant \lVert D_k^{(n)} \rVert_F^2 = \sum_{i=1}^k \bigg( 
\left(c_{ii}^{(n)}-1\right)^2 + \sum_{j=1, j\neq i}^d \left(c_{ji}^{(n)}\right)^2 
\bigg) \leqslant \sum_{i=1}^k \frac{\alpha^2}{k} = \alpha^2,
\end{equation}
so $\lVert D_k^{(n)} \rVert_2 \leqslant \alpha$ and 
$\lVert C_k^{(n)}Y_k^{(n)} - Y_k^{(n)}C_k^{(n)} \rVert_2 
= 
\lVert D_k^{(n)} Y_k^{(n)}-Y_k^{(n)}D_k^{(n)} \rVert_2 
\leqslant 
\frac{6\mu}{\epsilon}\alpha$. 
Finally, we come to estimate $\lVert R_0^{(n)} \rVert_2$  
and $\lVert R^{(n)} \rVert_2$: 
\begin{align*}
\lVert R_0^{(n)} \rVert_2 &\leqslant 
\left\lVert U_{-k}^{(n)}C_{-k}^{(n)} Y_k^{(n)} C_{-k}^{(n)\top}\Lambda_{-k}^{(n)} U_{-k}^{(n)\top} \right\rVert_2 \\
&\quad +\left\lVert U_k^{(n)} C_k^{(n)} \bigl(I + Y_k^{(n)}\bigr) C_{-k}^{(n)\top}\Lambda_{-k}^{(n)} U_{-k}^{(n)\top} \right\rVert_2 \\
&\quad +\left\lVert U_{-k}^{(n)} C_{-k}^{(n)} \bigl(I + Y_k^{(n)}\bigr) C_k^{(n)\top}\Lambda_k^{(n)} U_k^{(n)\top} \right\rVert_2 \\
&\leqslant L \left\lVert C_{-k}^{(n)} Y_k^{(n)} C_{-k}^{(n)\top} \right\rVert_2 
+2L \left\lVert C_k^{(n)}\bigl(I+Y_k^{(n)}\bigr) C_{-k}^{(n)\top} \right\rVert_2\\
&\leqslant \frac{3\mu}{\epsilon}L\alpha^2 +2\Bigl(1+\frac{3\mu}{\epsilon}\Bigr) L\alpha,\\
\lVert R^{(n)} \rVert_2 &\leqslant 
\left \lVert U_k^{(n)}\bigl(I+Y_k^{(n)}\bigr)\left(\Bigl(1-\frac{\alpha}{2}\Bigr) I - C_k^{(n)} C_k^{(n)\top}\right) \Lambda_k^{(n)} U_k^{(n)\top} \right \rVert_2\\
&\quad +\left \lVert U_{-k}^{(n)}\left(\frac{\alpha^{2}}{2} I - C_{-k}^{(n)} C_{-k}^{(n)\top}\right) \Lambda_{-k}^{(n)} U_{-k}^{(n)\top} \right \rVert_2\\
&\quad +\left \lVert U_k^{(n)} \Bigl(C_k^{(n)} Y_k^{(n)}-Y_k^{(n)}C_k^{(n)}\Bigr) C_k^{(n)\top} \Lambda_k^{(n)} U_k^{(n)\top} \right \rVert_2\\
&\leqslant  
L \left\lVert \bigl(I+Y_k^{(n)}\bigr)\Bigl(\bigl(1-\frac{\alpha}{2}\bigr) I - C_k^{(n)} C_k^{(n)\top}\Bigr) \right\rVert_2 \\
&\quad + L \left\lVert \frac{\alpha^{2}}{2} I - C_{-k}^{(n)} C_{-k}^{(n)\top} \right\rVert_2
+ L \left\lVert \bigl(C_k^{(n)} Y_k^{(n)}-Y_k^{(n)}C_k^{(n)}\bigr) C_k^{(n)\top} \right\rVert_2\\
& \leqslant \frac{\alpha}{2}\Bigl(1+\frac{3\mu}{\epsilon}\Bigr)L
+ \frac{\alpha^2}{2}L
+ \frac{6\mu}{\epsilon}L\alpha.
\end{align*}
Therefore, we have
\begin{equation}\label{eq:rr0}
\lVert R^{(n)}\rVert_2 + \lVert R_0^{(n)}\rVert_2 
\leqslant \left( \left(\frac{5}{2}+\frac{27\mu}{2\epsilon}\right) \alpha + \left(\frac{3\mu}{\epsilon}+\frac{1}{2}\right) \alpha^2 \right)L 
= p(\alpha)L.
\end{equation}
From \eqref{eq:imbnk} and \eqref{eq:rr0}, we can estimate $\lVert Q^{(n)} \rVert_2$ from \eqref{eq:qnleq}:
\begin{equation}\label{eq:inexact_c}
\bigl\lVert Q^{(n)} \bigr\rVert_2 \leqslant \frac{(2+\alpha-\alpha^2) L+(2-\alpha) \mu+4p(\alpha)L}{b(\alpha)}.
\end{equation}
Finally, substituting \eqref{eq:inexact_c} into \eqref{eq:inexact_b} yields \eqref{eq:inexact_a}.
\end{proof}

Based on \Cref{thm:approx-eig} and Lemma~\ref{lem:series}, we obtain the following conclusion regarding the linear convergence rate of SCS-HiSD in the case of inexact eigenvectors.
For sufficiently small $\alpha>0$, the linear convergence rate $\frac{1}{1+q(\alpha)}$ is independent of $\epsilon$. 
\begin{theorem}\label{thm:inexact-conclusion}
Under Assumptions~\ref{ass:1} and \ref{ass:2}, if $\alpha$ satisfies \eqref{eq:alpha1}, the initial guess $x^{(0)}$ satisfies
\begin{equation}
r_0 = \lVert x^{(0)} - x^* \rVert_2 < \min\{\delta, \hat{r}\}, \quad \hat{r} = \frac{q(\alpha)}{c(\alpha)},
\end{equation}
and the parameters are set as $\beta_n=\frac{4}{b(\alpha)}$ and $\eta_n=\frac{3}{2}\mu$ for all $n\geqslant0$, 
then the sequence $x^{(n)}$ defined by \eqref{eq:inexact_iter} converges to $x^*$ as $n \to \infty$ with a linear convergence rate:
\begin{equation}
r_n = \lVert x^{(n)} - x^* \rVert_2 \leqslant
\left(\frac{1}{1+q(\alpha)}\right)^n \frac{\hat{r} r_0}{\hat{r} - r_0}.
\end{equation}
Here, $b(\alpha)$, $q(\alpha)$ and $c(\alpha)$ are defined as in \eqref{eq:pbqc}.
\end{theorem}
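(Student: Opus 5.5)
The argument mirrors the proof of \Cref{thm:exact_conclusion}: combine the one-step estimate of \Cref{thm:approx-eig} with the recurrence lemma, \Cref{lem:series}(b). The only real work is checking that the constants $q(\alpha)$ and $c(\alpha)$ meet the hypotheses of that lemma, and that every iterate stays in the region where \Cref{thm:approx-eig} applies.

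\textbf{Step 1: the constants are admissible.} We need $q(\alpha)\in(0,1)$ and $c(\alpha)>0$.
\begin{itemize}
\item Since $\alpha<1$, both $2-\alpha-\alpha^2$ and $2-\alpha$ are positive, so $b(\alpha)>0$, and hence $c(\alpha)=6M\mu/(b(\alpha)\epsilon)>0$.
\item The numerator of $q(\alpha)$ is $(4-2\alpha)\mu-(2\alpha+4p(\alpha))L$. Dividing by $2\mu$, positivity is equivalent to $2-\alpha-\kappa(\alpha+2p(\alpha))>0$. This is exactly the second condition in \eqref{eq:alpha1}, so $q(\alpha)>0$.
\item For $q(\alpha)<1$, compare with the denominator: $b(\alpha)-\bigl[(4-2\alpha)\mu-(2\alpha+4p)L\bigr]=(2+\alpha-\alpha^2+4p)L+(2-\alpha)\mu>0$. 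Equivalently, this says the bound on $\|Q^{(n)}\|_2$ in \eqref{eq:inexact_c} is positive.
\end{itemize}

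\textbf{Step 2: induction keeping the iterates in range.} Set $\hat r=q(\alpha)/c(\alpha)$ and suppose $r_n<\min\{\delta,\hat r\}$. This is true for $n=0$ by hypothesis. Then $x^{(n)}\in U(x^*,\delta)$, so \Cref{thm:approx-eig} gives $r_{n+1}\le(1-q)r_n+c\,r_n^2$. By \Cref{lem:series}(a), $r_{n+1}<r_n<\min\{\delta,\hat r\}$, which closes the induction.

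Assumption~\ref{ass:2} is imposed for all $n$, so the hypothesis on the directions $v_i^{(n)}$ holds at every step. The parameter choices $\beta_n=4/b(\alpha)$ and $\eta_n=\tfrac32\mu$ are constant in $n$, so \Cref{thm:approx-eig} applies uniformly and the recurrence holds for every $n\ge0$ with the same $q$ and $c$.

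\textbf{Step 3: conclude.} With the recurrence valid for all $n$ and $r_0<q/c$, \Cref{lem:series}(b) gives
\[
r_n\le\Bigl(\frac{1}{1+q}\Bigr)^n\frac{q\,r_0}{q-c\,r_0}.
\]
Dividing the numerator and denominator of the last factor by $c$ yields $\hat r r_0/(\hat r-r_0)$, which is the claimed bound. Since $1/(1+q(\alpha))<1$, this also shows $x^{(n)}\to x^*$.

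The one point needing care is the invariance issue in Step 2: \Cref{lem:series} is stated assuming the recurrence holds for all $n$, whereas \Cref{thm:approx-eig} supplies it only while $r_n<\delta$. Interleaving the two lemmas in the induction, as above, resolves this; the rest is bookkeeping.
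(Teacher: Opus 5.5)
Your proposal is correct and follows the paper's route. The paper obtains this theorem by combining the one-step estimate of \Cref{thm:approx-eig} with \Cref{lem:series}(b), as in the proof of \Cref{thm:exact_conclusion}; your check that \eqref{eq:alpha1} yields $q(\alpha)\in(0,1)$ and $c(\alpha)>0$, and your induction keeping $r_n<\min\{\delta,\hat r\}$ so the one-step estimate applies at every step, spell out details the paper leaves implicit.
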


The parameter $\eta$ plays an important role in the performance of the proposed method. 
A small $\eta$ yields limited acceleration, whereas a large $\eta$ may compromise numerical stability and lead to divergence.
From the theoretical results in \Cref{thm:exact_conclusion,thm:inexact-conclusion}, $\eta$ should be of the same order as the smallest positive eigenvalue of the Hessian to achieve a better convergence  behavior.
In \Cref{alg:hisd,alg:qnhisd}, $v_1^{(n)}, \dots, v_k^{(n)}$ approximate the orthonormal eigenvectors corresponding to the smallest $k$ eigenvalues, which are all negative near the index-$k$ saddle point. 
Nevertheless, many eigenvector solvers (denoted by ``\texttt{EigenSol}'' in \Cref{alg:hisd,alg:qnhisd}), e.g., LOBPCG, typically provide approximate information about several of the smallest positive eigenvalues during computations. 
Therefore, the approximation of the smallest positive eigenvalue is readily available within the algorithm implementation without much additional computational cost.

\section{Numerical experiments}\label{sec:5}
In this section, we conduct multiple numerical experiments to compare the performance of SCS-HiSD with the original HiSD, thereby demonstrating the efficiency of the proposed method. 
For numerical stability, the denominator $|\alpha_i^{(n)}|$ in \Cref{alg:qnhisd} is regularized as $|\alpha_i^{(n)}|_+ = \max\{|\alpha_i^{(n)}|, \varepsilon\}$ with a small positive parameter $\varepsilon=10^{-4}$. 
In all experiments, the ascent directions $v_i^{(n)}$ are updated using the one-step LOBPCG method in each iteration.

\subsection{Modified Strictly Convex 2 function}
As the first numerical example, we consider the modified Strictly Convex 2 function \cite{raydan1997barzilai}, defined as
\begin{equation} 
E(x) = \frac{1}{10} \sum_{i=1}^{d} s_i a_i \left(\exp(x_i) - x_i\right), \qquad a_i = 5i-4,
\end{equation} 
where $s_i = -1$ for $1 \leqslant i \leqslant 5$ and $s_i = 1$ for $6 \leqslant i \leqslant d$.
We set the dimension to $d = 100$ and aim to locate an index-$5$ saddle point $x^* = [0, \dots, 0]^{\top}$ from $x^{(0)} = [-6, \dots, -6]^{\top}$, using a fixed step size $\beta_0 = 0.02$, which satisfies the step-size condition in \Cref{thm:exact-eig}. 
We set $\eta = 2.6$, an empirical value that approximates the smallest positive eigenvalue of the Hessian at ${x}^*$. 
A smaller value of $\eta$ ensures convergence but provides weaker acceleration. 
As illustrated in \Cref{fig:err_strictly}, SCS-HiSD converges to $x^*$ substantially faster than HiSD, validating the effectiveness of the proposed method.

\begin{figure}[htbp]
\centering
\includegraphics[width=0.45\linewidth]{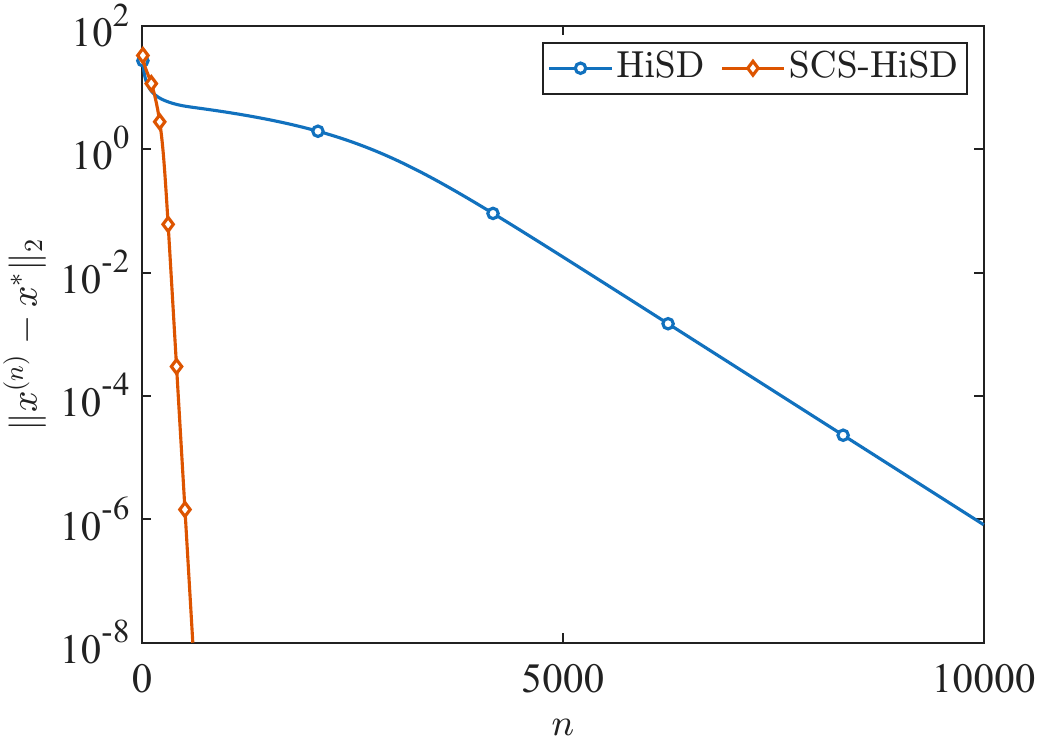}
\caption{ \label{fig:err_strictly} 
Evolution of the error $\|x^{(n)}-x^*\|_2$ with respect to the iteration number $n$ for the modified Strictly Convex 2 function.}
\end{figure}

\subsection{Modified Rosenbrock-type function}
In this example, we consider the Rosenbrock-type function \cite{rosenbrock1960automatic} with modified terms \cite{luo2022convergence},
\begin{equation}
E(x) = \sum_{i=1}^{d-1} \Bigl( 100 \bigl(x_{i+1} - x_i^{2}\bigr) ^{2} + (1 - x_i) ^{2} \Bigr) + \sum_{i=1}^{d} s_i  \arctan^2(x_i - 1).
\end{equation}
We set the dimension to $d = 1000$ and specify the parameters $s_i = 1$ for $6\leqslant i <d$ and $s_d = 150$. 
For $1 \leqslant i \leqslant 5$, we consider three cases: (a) $s_i = -662$, (b) $s_i = -659$, and (c) $s_i = -657.5$.
In all cases, $x^\ast = [1, \dots, 1]^{\top}$ is an index-4 saddle point, and the initialization is $x^{(0)} = x^\ast + r/{\|r\|_2}$ with a fixed $r \sim \mathcal{N}(0, I_d)$.
The spectral properties of the Hessian at $x^*$ differ significantly among the three cases.
The largest eigenvalue in each case is about 1800. 
The negative eigenvalue closest to zero is $-8.89$ in case (a), $-3.24$ in case (b), and $-0.41$ in case (c).
Therefore, the ill-conditioning becomes increasingly severe from case (a) to (c), leading to progressively slower dynamics in the unstable subspace.
This example provides a useful testbed for examining how the performance of different algorithms changes as the problem becomes increasingly ill-conditioned.

In addition to the original HiSD, we compare SCS-HiSD with the heavy-ball accelerated HiSD method (A-HiSD) \cite{luo2025accelerated}.
The corresponding update is given by
\begin{equation}
    x^{(n+1)} = x^{(n)} + \beta^{(n)} g^{(n)} + \gamma_{\mathrm{M}} \bigl(x^{(n)} - x^{(n-1)}\bigr),
\end{equation}
with $x^{(-1)}=x^{(0)}$, where $g^{(n)}$ denotes the HiSD search direction and $\gamma_{\mathrm{M}}$ is the momentum parameter. 
Besides fixed step sizes, we also consider BB step sizes,
$\beta^{(n)} = \left| {\langle\Delta g^{(n)}, \Delta x^{(n)}\rangle}/{\langle\Delta g^{(n)}, \Delta g^{(n)}\rangle} \right| $, 
where $\Delta g^{(n)}= g^{(n)}-g^{(n-1)}$ and $\Delta x^{(n)} = x^{(n)}-x^{(n-1)}$ \cite{barzilai1988bb}.
In this experiment, the BB step sizes are clipped to $[\beta_0, 8\beta_0]$ for A-HiSD and $[0.5\beta_0, 8\beta_0]$ for the others.
In SCS-HiSD, we set $\eta = 180$, which approximates the smallest positive eigenvalue.

In numerical experiments, we tune the step sizes $\beta_0$ in each method and $\gamma_{\mathrm{M}}$ in A-HiSD for a fair comparison.
For all methods, we use $\beta_0 = 0.001$ for both fixed and BB step sizes.
For A-HiSD, we use $\gamma_{\mathrm{M}}=0.75$.
These parameters yield good performance, whereas larger choices may lead to divergence.

\begin{figure}[htbp]
\centering
\includegraphics[width=1\linewidth]{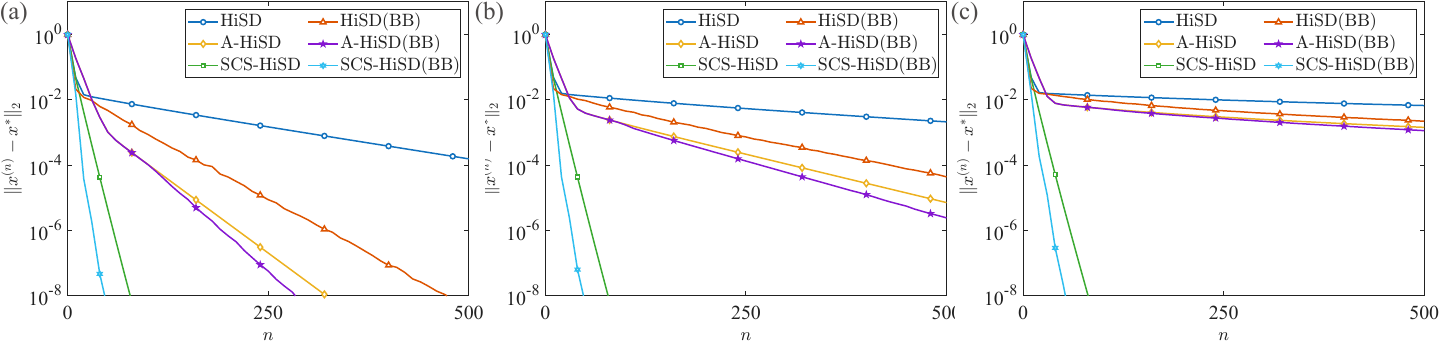}
\caption{\label{fig:err_Rosenbrock}
Evolution of the error $\|x^{(n)}-x^*\|_2$ with respect to the iteration number $n$ for the modified Rosenbrock-type function in cases (a)--(c).
The notation ``(BB)'' denotes the use of BB step sizes, while the others correspond to fixed step sizes.}
\end{figure}

As the small-magnitude negative eigenvalue approaches zero from case (a) to case (c), the baseline HiSD method exhibits increasingly severe stagnation, as shown in \Cref{fig:err_Rosenbrock}.
In case (a), which is moderately ill-conditioned, A-HiSD provides significant acceleration comparable to that of SCS-HiSD.
As the ill-conditioning becomes more severe in cases (b) and (c), the acceleration provided by A-HiSD becomes substantially less effective.
In particular, in case (c), A-HiSD exhibits stagnation behavior similar to that of the original HiSD method.
In contrast, the proposed SCS-HiSD method effectively overcomes this bottleneck and achieves significantly faster convergence in all three cases.
The convergence behavior of SCS-HiSD remains relatively stable as the problem becomes increasingly ill-conditioned.

These results demonstrate that SCS-HiSD achieves robust acceleration for ill-conditioned saddle points with small-magnitude negative eigenvalues. 
The original HiSD suffers from slow convergence in such cases, leading to high computational cost. 
For moderately ill-conditioned problems, A-HiSD and SCS-HiSD exhibit comparable performance.
However, as the ill-conditioning becomes severe, the acceleration provided by A-HiSD becomes limited and insufficient to overcome the resulting computational bottleneck.
In contrast, the proposed SCS-HiSD method remains consistently effective and achieves substantial acceleration even in severely ill-conditioned cases.

\subsection{Nematic liquid crystals confined in a square} 
In this numerical experiment, we consider rod-like nematic liquid crystals confined in a two-dimensional square domain \cite{kralj2011curvature,robinson2017molecular}. 
According to the Landau--de Gennes (LdG) theory  \cite{de1993physics}, the liquid-crystal system can be described by the $\mathsf{Q}$-tensor, a $2\times2$ symmetric traceless tensor field defined on the square domain $\Omega=[-1,1]^2$. 
We adopt the following nondimensionalized LdG free energy:
\begin{equation}\label{eq:ldg}
E[\mathsf{Q}]=\int_{\Omega}\left(\frac12|\nabla \mathsf{Q}|^2+\alpha 
\left(\frac{a}{4}|\mathsf{Q}|^2+\frac{1}{8}|\mathsf{Q}|^4\right)
\right)\mathrm d\mathbf r, 
\quad
\mathsf{Q}= 
\begin{bmatrix} Q_{11}(\mathbf{r}) & Q_{12}(\mathbf{r})\\
Q_{12}(\mathbf{r}) & -Q_{11}(\mathbf{r})\end{bmatrix},
\end{equation}
where $\alpha>0$ is a parameter related to the physical domain size \cite{yin2020construction}.
The parameter $a$ represents the reduced temperature and is taken as $a = -1.672$ to ensure that the system is in the nematic state. 
Strong anchoring (tangential Dirichlet) boundary conditions are imposed on $\partial\Omega$ to enforce alignment of the liquid crystals with the square boundary, 
\begin{equation}
\begin{aligned}
\mathsf{Q}(x=\pm1, y) = \dfrac{S_0}{2}
\begin{bmatrix} 1 & 0\\0 & -1\end{bmatrix}, \quad y \in (-1,1),
\\
\mathsf{Q}(x, y=\pm1) = \dfrac{S_0}{2}
\begin{bmatrix} -1 & 0\\0 & 1\end{bmatrix}, \quad x \in (-1,1),
\end{aligned}
\end{equation}
where $S_0 = \sqrt{2|a|}$. 
For large $\alpha$, these boundary conditions give rise to multiple local minima and saddle points of \eqref{eq:ldg}, making this system a prototypical example of a complex energy landscape.
The energy functional is discretized using 100 uniform grid points along each dimension. 
Previous studies have demonstrated the effectiveness of HiSD in locating saddle points and constructing solution landscapes.

\begin{figure}[htbp]
\centering
\includegraphics[width=0.9\linewidth]{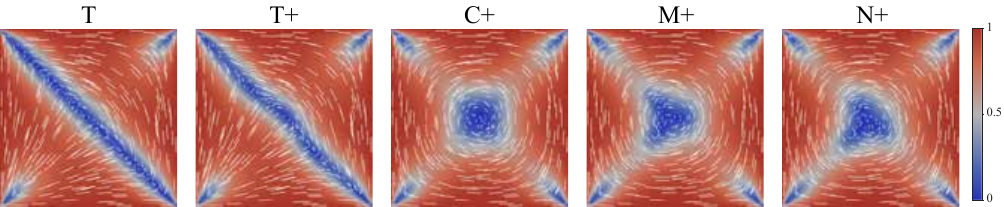}
\caption{\label{fig:nlc} 
Several saddle points of the liquid crystal example for $\alpha=50$. 
The color represents the relative magnitude of directional ordering $|S(\mathbf{r})|/S_0$, where $S(\mathbf{r})/2=\pm \sqrt{Q_{11}^2(\mathbf{r}) + Q_{12}^2(\mathbf{r})}$ represents the eigenvalues of $\mathsf{Q}(\mathbf{r})$.
The directions of white bars represent the nematic field directors $\mathbf{n}(\mathbf{r})=(\cos \theta(\mathbf{r}), \sin \theta(\mathbf{r}))$, where $\cos \theta(\mathbf{r})=\sqrt{1/2 + Q_{11}(\mathbf{r})/|S(\mathbf{r})|}$. 
}
\end{figure}

As the domain size parameter $\alpha$ increases, the system experiences a sequence of bifurcations (see Fig.~1 in the Supplemental Material of \cite{yin2020construction}), thereby generating a large number of stationary points. 
Mathematically, these bifurcations occur only when the Hessian at the corresponding stationary point possesses a zero eigenvalue. 
Consequently, near each bifurcation, the Hessian at the corresponding stationary point has eigenvalues close to zero, which substantially reduces the convergence rate of HiSD. 
For example, the $T$ state is an index-3 saddle point at $\alpha=45$ and an index-4 saddle point at $\alpha=50$. 
A supercritical pitchfork bifurcation occurs at a critical parameter $\alpha_c\in(45,50)$, giving rise to a $T+$ state for $\alpha > \alpha_c$. 
These saddle points at $\alpha=50$ are illustrated in \Cref{fig:nlc}. 
When $\alpha$ slightly exceeds $\alpha_c$, the Hessian at the $T$ state exhibits a small-magnitude negative eigenvalue, while the Hessian at the $T+$ state has a small positive eigenvalue, as shown in \Cref{tab:ev}. 
Because of this pair of small-magnitude Hessian eigenvalues, the downward search from the $T$ state to the $T+$ state by 3-HiSD is severely slowed down.
As a result, small-magnitude eigenvalues persist throughout this search, leading to a large number of iterations.

\begin{table}
\centering
\caption{Smallest eigenvalues of Hessians at some states for $\alpha=45$ and $50$. 
Boldface numbers denote eigenvalues associated with bifurcations. }
\begin{tabular}{cccrrrrrr}
\toprule
$\alpha$ & State & Index & \multicolumn{6}{c}{Smallest eigenvalues} \\ \midrule  
45 & $T $ & 3& $-15.860$& $-13.473$& $-7.352$& $\mathbf{1.203}$& $3.019$& $4.691$   \\
50 & $T $ & 4& $-17.930$& $-15.283$& $-9.077$& $\mathbf{-0.415}$& $2.771$& $4.645$  \\
50 & $T+$ & 3& $-17.087$& $-14.414$& $-8.124$& $\mathbf{0.801}$& $2.817$& $4.510$  \\
\hline
45 & $C+$ & 4& $-18.185$& $-15.057$& $\mathbf{-0.527}$& $\mathbf{-0.527}$& $4.650$& $4.650$ \\
50 & $C+$ & 2& $-19.500$& $-16.790$& $\mathbf{0.308}$& $\mathbf{0.308}$& $5.138$& $5.138$\\
50 & $M+$ & 3& $-19.732$& $-16.815$& $\mathbf{-0.681}$& $\mathbf{0.005}$& $4.429$& $4.835$\\
50 & $N+$ & 4& $-19.738$& $-16.811$& $\mathbf{-0.699}$& $\mathbf{-0.005}$& $4.373$& $4.896$ \\
\bottomrule
\end{tabular}
\label{tab:ev}
\end{table}

Near the $C+$ state, the system exhibits severe ill-conditioning, as the emergence of the $M+$ and $N+$ states introduces additional small-magnitude Hessian eigenvalues. 
In these ill-conditioned cases, HiSD typically suffers from slow convergence due to small-magnitude Hessian eigenvalues, which motivates the use of SCS-HiSD. 
To demonstrate the computational efficiency of the proposed method, we perform a series of numerical experiments involving both upward and downward searches of $C+\leftrightarrow M+$, $M+ \leftrightarrow N+$, and $T+ \leftrightarrow T$ at $\alpha = 50$. 
The starting point is obtained by perturbing the source saddle point by $10^{-3}$ along the normalized eigenvector corresponding to the smallest-magnitude negative (resp. positive) Hessian eigenvalue for downward (resp. upward) searches.
The termination criterion is set as $\|\nabla E\|_{L^2}<10^{-6}$.

All methods employ BB step sizes clipped to $[\beta_0/5,\, 5\beta_0]$, with initial step size $\beta_0$.
The parameters are set as follows, which are nearly optimal based on extensive tuning: 
for HiSD, $\beta_0 = 2.4\times10^{-4}$; 
for A-HiSD, $\gamma_{\mathrm{M}} = 0.99$ and $\beta_0 = 8\times10^{-4}$; 
for SCS-HiSD, $\eta = 16$ and $\beta_0 = 2.4\times10^{-4}$.
To address the small positive eigenvalues at some saddle points, we employ the extended SCS-HiSD$+l$ method with $l=6-k$ to search for index-$k$ saddle points. 
Note that each SCS-HiSD$+l$ iteration involves $l$ additional directions, resulting in a per-iteration computational cost approximately $(1+l/k)$ times that of SCS-HiSD.

As a representative severely ill-conditioned case, the numerical results for the saddle search $M+ \to N+$ with $k=4$ are illustrated in \Cref{fig:err_nlc}. 
Even with nearly optimal step sizes, both HiSD and A-HiSD exhibit severe stagnation and fail to escape efficiently from the $M+$ state. 
Specifically, HiSD requires more than 6,000,000 iterations to reach the termination criterion, while A-HiSD takes more than 140,000 iterations. 
SCS-HiSD alleviates this ill-conditioning effect and attains substantially faster convergence in far fewer iterations. 
Furthermore, the SCS-HiSD$+l$ method substantially accelerates convergence, achieving the fastest decrease in the gradient norm within 3,000 iterations. 
In terms of wall time, the SCS-HiSD$+l$ method achieves a $900\times$ acceleration compared with HiSD, and $30\times$ compared with A-HiSD. 

\begin{figure}[htbp]
\centering
\includegraphics[width=0.5\linewidth]{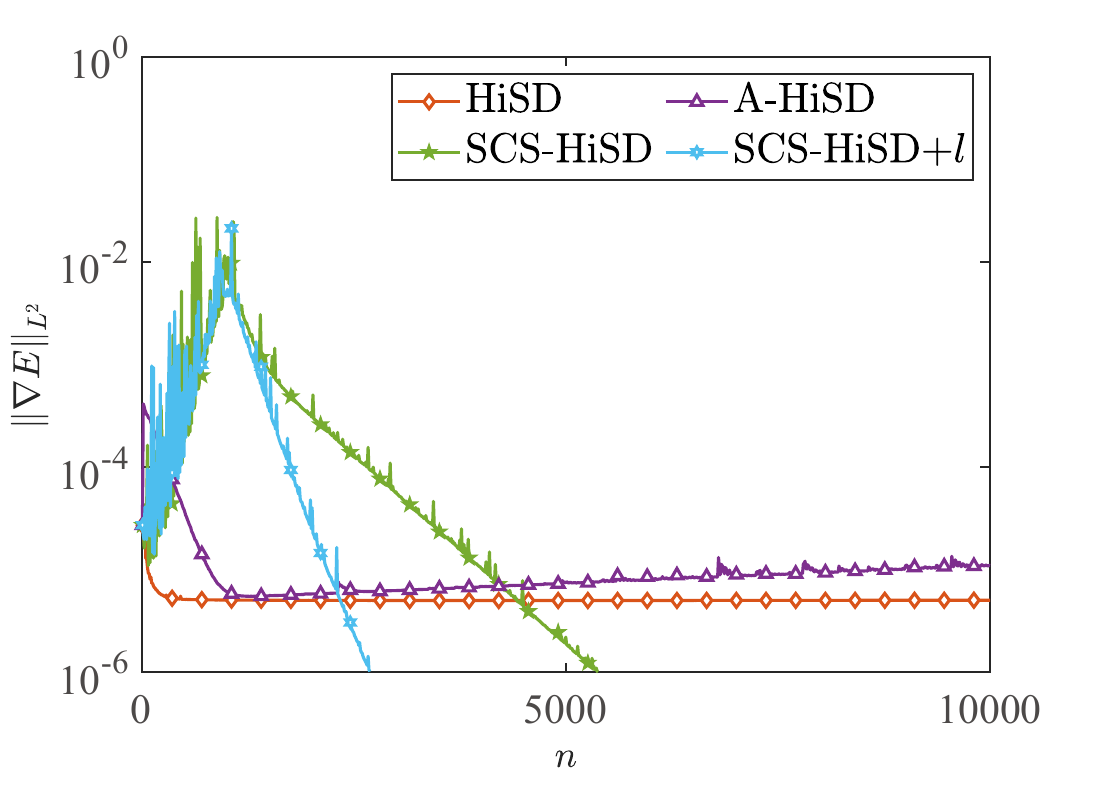}
\caption{\label{fig:err_nlc} 
Evolution of the gradient norm $\|\nabla E\|_{L^2}$ with respect to the iteration number $n$ for the saddle search $M+ \to N+$ in the liquid-crystal example. }
\end{figure}

\begin{table}[htbp]
\centering
\caption{\label{tab:iteration_comparison}
Wall time and number of iterations for saddle searches in the liquid-crystal example. ($l=6-k$)}
\begin{tabular}{cccccc}
\toprule
\multicolumn{2}{c}{Saddle search} & 
\multirow{2}{*}{$k$} & 
\multicolumn{3}{c}{Wall time/s (Number of iterations)} \\
\cmidrule(lr){1-2}\cmidrule(lr){4-6}
From & To & & \multicolumn{1}{c}{HiSD} & \multicolumn{1}{c}{A-HiSD} & {SCS-HiSD$+l$} \\
\midrule
$T+ $ & $T$  & 4 & $1478.5\,(84{,}366)$ & $47.4\,(2{,}960)$ & $69.6\,(2{,}346)$ \\
$T  $ & $T+$ & 3 & $9612.2\,(865{,}572)$ & $34.6\,(3{,}366)$ & $71.0\,(2{,}390)$ \\   
\midrule
$M+ $ & $C+$ & 2 & $558.8\,(113{,}421)$ & $13.7\,(3{,}006)$ & $73.0\,(2{,}413)$ \\
$C+ $ & $M+$ & 3 & $37101.8\,(3{,}310{,}293)$ & $832.3\,(76{,}870)$ & $90.6\,(3{,}067)$ \\
$N+ $ & $M+$ & 3 & $43399.7\,(6{,}015{,}628)$ & $1546.8\,(142{,}746)$ & $79.3\,(2{,}582)$ \\
$M+ $ & $N+$ & 4 & $68688.1\,(6{,}021{,}049)$ & $2445.9\,(142{,}751)$ & $72.1\,(2{,}687)$ \\
\bottomrule
\end{tabular}
\end{table}

\Cref{tab:iteration_comparison} summarizes the numerical results for various saddle searches. 
HiSD incurs a large computational cost, with the number of iterations increasing sharply as the smallest-magnitude Hessian eigenvalue approaches zero, as evidenced by \Cref{tab:ev}.
A-HiSD provides effective acceleration for locating the $T$, $T+$, and $C+$ states, where the relevant small-magnitude eigenvalues are on the order of $10^{-1}$. 
However, for severely ill-conditioned saddle points $M+$ and $N+$, the acceleration effect of A-HiSD remains insufficient, and these cases dominate the overall computational cost.
In contrast, the proposed methods effectively handle these ill-conditioned cases and consistently converge within 3,000 iterations for each saddle search.
This is consistent with the observations in the previous example, confirming that SCS-HiSD is particularly advantageous in locating severely ill-conditioned saddle points.

\section{Conclusions}\label{sec:6}
In this paper, we have proposed the SCS-HiSD method to accelerate HiSD for locating ill-conditioned saddle points.
By exploiting the approximate eigenvectors already maintained during the iterations, SCS-HiSD constructs a subspace inverse-Hessian approximation at negligible additional cost and incorporates it as an adaptive scaling along the unstable directions.
We rigorously establish the linear stability of the continuous SCS-HiSD system and provide a local convergence analysis for the discrete iterative scheme.
Our theoretical results show that, with either exact or inexact eigenvectors, the SCS-HiSD scheme achieves a substantially improved convergence rate in ill-conditioned cases.
Moreover, SCS-HiSD has been extended to mitigate the analogous slowdown caused by small positive Hessian eigenvalues.
Numerical experiments on benchmark functions and the liquid-crystal model confirm that SCS-HiSD substantially reduces both the iteration count and the overall computational cost, with pronounced acceleration in locating severely ill-conditioned saddle points. 
These results demonstrate that SCS-HiSD is a reliable and computationally efficient method for exploring complex energy landscapes, particularly for ill-conditioned saddle points.

In the present work, SCS-HiSD applies curvature scaling only on the unstable subspace, targeting ill-conditioning caused by small-magnitude negative Hessian eigenvalues.
Although the extended SCS-HiSD scheme can handle a small number of additional small positive eigenvalues, the acceleration may remain limited when many positive eigenvalues are close to zero.
A natural extension is to apply a quasi-Newton approach to the stable subspace without additional eigenvector computations; the algorithmic design and theoretical analysis of such a scheme are left for future work.

\bibliographystyle{siamplain}
\bibliography{references}
\end{document}